\documentclass[12pt]{amsart}

\usepackage[T1]{fontenc}
\usepackage[utf8]{inputenc}
\usepackage[protrusion=true,expansion=false]{microtype}
\usepackage{amsmath,amssymb,aliascnt,needspace,color, comment}
\usepackage[hidelinks]{hyperref}
\usepackage[nameinlink,capitalise,noabbrev]{cleveref}
\usepackage[margin=1in]{geometry}

\allowdisplaybreaks
\numberwithin{equation}{section}

\newtheorem{theorem}{Theorem}[section]
\newaliascnt{lemma}{theorem}
\newtheorem{lemma}[lemma]{Lemma}
\aliascntresetthe{lemma}
\newaliascnt{claim}{theorem}
\newtheorem{claim}[claim]{Claim}
\aliascntresetthe{claim}
\newaliascnt{proposition}{theorem}
\newtheorem{proposition}[proposition]{Proposition}
\aliascntresetthe{proposition}
\newaliascnt{corollary}{theorem}

\aliascntresetthe{corollary}
\newaliascnt{problem}{theorem}

\aliascntresetthe{problem}
\theoremstyle{definition}
\newaliascnt{definition}{theorem}

\aliascntresetthe{definition}
\theoremstyle{remark}
\newaliascnt{remark}{theorem}

\aliascntresetthe{remark}

\crefname{theorem}{Theorem}{Theorems}
\Crefname{theorem}{Theorem}{Theorems}
\crefname{lemma}{Lemma}{Lemmas}
\Crefname{lemma}{Lemma}{Lemmas}
\crefname{proposition}{Proposition}{Propositions}
\Crefname{proposition}{Proposition}{Propositions}
\crefname{corollary}{Corollary}{Corollaries}
\Crefname{corollary}{Corollary}{Corollaries}
\crefname{definition}{Definition}{Definitions}
\Crefname{definition}{Definition}{Definitions}
\crefname{remark}{Remark}{Remarks}
\Crefname{remark}{Remark}{Remarks}

\newcommand{\x}{\times}
\newcommand{\omegaone}{{\omega_1}}

\newcommand{\Pow}{\operatorname{P}}

\newcommand{\HFCw}{\mathrm{HFC}_\mathrm{w}}
\newcommand{\set}[1]{\left\{#1\right\}}

\newcommand{\Un}{\bigcup}

\newcommand{\sset}[2]{\{\, #1 : #2\, \}}
\newcommand{\fd}{\mathfrak{d}}

\newcommand{\card}[1]{\left| #1 \right|}

\newcommand{\sub}{\subseteq}

\newcommand{\fc}{\mathfrak{c}}

\newcommand{\ww}{\omega^\omega}

\makeatletter
\newenvironment{citelist}{%
  \def\@cite##1##2{{\@citestyle\citeform{##1}\if@tempswa, ##2\fi}}%
  \@citestyle[\ignorespaces
}{\unskip]}
\makeatother

\title[The D-space Problem]{A consistent solution of the D-space Problem}
\author{Boaz Tsaban}
\address{Department of Mathematics, Bar-Ilan University}
\email{tsaban@math.biu.ac.il}
\subjclass[2020]{Primary 54D20; Secondary 54A35, 03E35}
\keywords{D-space, hereditarily Lindel\"of space, Menger space, diamond principle, $\HFCw$ space, ultraparacompact space}

\begin{document}

\begin{abstract}
Settling a central problem in set-theoretic topology, and many related problems, we prove that there is, consistently, a regular Lindel\"of space of cardinality $\aleph_1$ that is not a D-space.
The existence of such a space is independent of ZFC; we establish its equivalence to a weak form of the Continuum Hypothesis.
The proof of the main theorem is adjustable to produce consistent examples with a variety of stronger properties.
\end{abstract}

\maketitle

\bigskip

\section{Introduction}\label{sec:introduction}

We consider an old, basic question about covering properties.
By \emph{space} we mean a Hausdorff topological space.
Recall that a space is \emph{Lindel\"of} if every open cover of the space has a
countable subcover.
A space $X$ is a \emph{D-space} if every open cover $\sset{N(x)}{x\in X}$
with $x\in N(x)$ for all $x\in X$ (a \emph{neighborhood assignment}) has a subcover
$\sset{N(x)}{x\in D}$ where the set $D$ is closed and discrete.
The \emph{D-Space Problem}~\cite{vanDouwenPfeffer1979} asks whether every regular
Lindel\"of space is a D-space.
This problem is listed in Hru\v{s}\'ak and Moore's compilation of twenty central problems in set-theoretic
topology~\cite[Problem~14]{HrusakMoore2007}.
It is sometimes stated for regular \emph{hereditarily} Lindel\"of spaces~\cite[Question~1]{Eisworth2007}.

There are excellent surveys of the D-space problem and its variations, the progress made
while addressing them, and the inherent
difficulties~\cite{Eisworth2007, GruenhageSurvey2011}.
Some later developments are cited in the bibliography;
in particular,
Aurichi~\cite[Corollary~2.7]{Aurichi2010} proved that every \emph{Menger} space (a
property stronger than Lindel\"of) is D, and Soukup and Szeptycki proved that Jensen's $\diamondsuit$ principle (a consistent
hypothesis that we will consider later) implies that there is a \emph{Hausdorff} hereditarily Lindel\"of space that is not
D~\cite[Corollary~3.8]{SoukupSzeptycki2012}, and a regular hereditarily Lindel\"of space that is not
\emph{strongly} D~\cite[Theorem~1]{SoukupSzeptycki2019}.
Jensen's $\diamondsuit$ is stronger than the Continuum Hypothesis (CH).
We establish the following result.

\begin{theorem}\label{thm:main}
Assuming CH, there is a regular, hereditarily Lindel\"of space of cardinality $\aleph_1$ that is not
a D-space.
\end{theorem}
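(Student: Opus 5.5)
We will build the space as a subspace $X$ of a concrete zero-dimensional space of weight $\aleph_1$, by a recursive construction of length $\omega_1$ that uses CH to enumerate in advance all potential witnesses that $X$ is D. The keywords ($\HFCw$, ultraparacompact) suggest a setting such as $2^{\omega_1}$, or a space of functions, with the topology generated by clopen sets. Regularity is then automatic. We construct points $x_\alpha$ ($\alpha<\omega_1$) and fix a neighborhood assignment $N(x_\alpha)$ at each step. We aim to arrange that $X=\sset{x_\alpha}{\alpha<\omega_1}$ is hereditarily Lindel\"of, and that no closed discrete $D\sub X$ has $\sset{N(x)}{x\in D}$ covering $X$.

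First, we reduce non-D to a countable condition. Since $X$ will be hereditarily Lindel\"of, every closed discrete subset is countable. So it suffices to defeat each countable $D\sub X$ that is closed discrete with $\Un_{x\in D}N(x)=X$. By CH there are only $\aleph_1$ countable subsets of $X$ and only $\aleph_1$ relevant countable families of basic clopen sets. We fix a bookkeeping enumeration $\sset{(D_\alpha,\mathcal U_\alpha)}{\alpha<\omega_1}$ of these, in which each item appears cofinally often or is handled at a stage beyond its support.

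Second, we define the stage-$\alpha$ requirements. Suppose $D_\alpha\sub\sset{x_\beta}{\beta<\alpha}$ is discrete in the part of $X$ built so far, and the sets $N(x)$, $x\in D_\alpha$, cover it. We choose $x_\alpha$ to be an accumulation point of $D_\alpha$, which destroys closed discreteness, or, if that is impossible, a point outside $\Un_{x\in D_\alpha}N(x)$. The natural approach is to make the neighborhoods $N(x_\beta)$ "thin", for example $N(x_\beta)\sub\sset{y}{y\restriction\beta\text{-coordinates agree with }x_\beta}$, in the style of Soukup--Szeptycki. With such neighborhoods, countably many of them cover only a small set, and we can put $x_\alpha$ outside their union. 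If instead $D_\alpha$ is "large" and its neighborhoods do cover, we put $x_\alpha$ in the closure of $D_\alpha$. For hereditary Lindel\"ofness we use the standard diagonalization: for each countable family $\mathcal U_\alpha$ of basic open sets, all later points $x_\gamma$ with $\gamma>\alpha$ that lie in $\overline{\Un\mathcal U_\alpha}$ (computed in the ambient space) are placed in $\Un\mathcal U_\alpha$. This is a Kunen/HL-line style argument that kills uncountable right-separated subspaces. Each stage imposes countably many constraints, each meagre or nowhere dense in the relevant perfect set. We therefore choose $x_\alpha$ generically, by a Baire category argument over the countably many constraints so far.

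The main obstacle is the tension between two demands. The neighborhoods must be small enough that countable unions never cover, yet $X$ must be hereditarily Lindel\"of; in fact regular Lindel\"of spaces with small neighborhood assignments tend to be D. Soukup--Szeptycki only obtained the Hausdorff case, or "not strongly D", so regularity plus HL plus non-D is exactly the delicate point. I would first try a two-type construction. One type is the points $x_\alpha$, with neighborhoods of the form $N(x_\alpha)=\sset{y}{y\ge_* x_\alpha\text{ on a tail}}$ in a $\fd$-scale-like order using CH. The other type is accumulation points that sit in closures of any countable discrete $D$ whose neighborhoods threaten to cover. The genericity argument must then verify that these accumulation points can be added without creating uncountable discrete or right-separated subsets. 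This verification is the step I expect to require the real new idea.
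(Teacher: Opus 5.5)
Your outline is a plan rather than a proof. The step you defer as ``the real new idea'' is the entire difficulty, and the mechanism you propose for it fails as stated.

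Your HL device on its own is fine. Using the ccc of $2^{\omegaone}$, forcing all later points into $(2^{\omegaone}\setminus\overline{\Un\mathcal U})\cup\Un\mathcal U$ for every countable family $\mathcal U$ of basic open sets does give hereditary Lindel\"ofness. What breaks is the Baire-category choice of $x_\alpha$. The non-D requirement puts $x_\alpha$ into a specific closed set: either the accumulation points of $D_\alpha$, or the complement of $\Un_{x\in D_\alpha}N(x)$. Such a set is typically nowhere dense in $2^{\omegaone}$. Your constraints are only dense open in $2^{\omegaone}$, so they need not be dense in that closed set, or even meet it. Two examples:
\begin{itemize}
\item Suppose some earlier $\mathcal U_\beta$ consists of basic open sets $V_x\ni x$ with $V_x\cap D_\alpha=\{x\}$, for $x\in D_\alpha$. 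Then every accumulation point of $D_\alpha$ lies in $\overline{\Un\mathcal U_\beta}\setminus\Un\mathcal U_\beta$ and is forbidden.
\item Suppose another earlier $\mathcal U_{\beta'}$ consists of basic open sets inducing the $N(x)$, $x\in D_\alpha$, with dense union. Then every later point is covered.
\end{itemize}
All families recur cofinally, and there are $\aleph_1$ sets $D$ to treat. Nothing in your bookkeeping stops both families from preceding the stage that treats $D_\alpha$. So imposing the HL constraints wholesale is incompatible with the non-D requirement.

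The ``thin'' neighborhoods do not rescue this. The set $\sset{y}{y\restriction\beta=x_\beta\restriction\beta}$ is not open in $2^{\omegaone}$ for infinite $\beta$, and $\sset{y}{y\ge^* f}$ is not open in $\ww$. Refining the topology to make them open puts regularity and hereditary Lindel\"ofness back in question.

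The paper resolves the tension by going the other way on both counts.
\begin{itemize}
\item \emph{The neighborhoods are large.} With $x_\alpha\sub\alpha$, put $N(x_\alpha)=\sset{x\in X}{\alpha\notin x}$. Then a countable closed discrete set $\sset{x_\alpha}{\alpha\in I}$ fails to cover as soon as some point of $X$ contains $I$ (\cref{lem:nonD}).
\item \emph{The HL constraints are selected.} By Juh\'asz's criterion (\cref{lem:HL}) together with \cref{prp:approximation}, it suffices to impose $O_\alpha=\bigcap_{b\sub m}[b,F_\alpha]$ only at ``active'' stages. At such a stage, $F_\alpha\sub[\alpha]^m$ is a countable pairwise disjoint family containing distinct blocks $s_n$, and each sequence $x_{s_{n,i}}$ converges to a point of $\Un_{\beta\le\alpha}G_\beta\cap\Pow(\beta)\sub X$.
\item \emph{Convergence makes the constraints compatible with non-D.} Given the index set $I$ of a closed discrete set, convergence forces all but finitely many blocks to miss $I$. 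So \cref{lem:selection} yields a point $x\supseteq I$ lying in every earlier $O_\beta$.
\item \emph{$X$ is closed under such points.} $X$ is all of $\Un_\alpha G_\alpha\cap\Pow(\alpha)$, not one chosen point per stage, so this $x$ is automatically in $X$.
\end{itemize}
The missing idea is this tailoring of the constraints to the neighborhood assignment. Without something like it, your outline cannot complete the non-D step.
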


This answers the question whether all regular hereditarily Lindel\"of spaces are provably D\@.
It remains open whether it is \emph{consistent} that every regular hereditarily Lindel\"of space is D\@.

The existence assertion in \Cref{thm:main} cannot be established in ZFC\@:
Let $\fd$ (the \emph{dominating number}) be the minimal cardinality of a set of functions in $\ww$
such that every  function in $\ww$ is eventually dominated by some function from this set.
While $\aleph_1\le\fd$ and CH implies equality, it is consistent that $\aleph_1<\fd$~\cite{Blass2010}.
By a classic result of Hurewicz, every Lindel\"of space of cardinality smaller than $\fd$ is Menger (and thus D).
Moreover, in \Cref{thm:d} we establish that the assertion $\fd=\aleph_1$ is sufficient for, and thus \emph{equivalent} to, the existence
of a space as in \Cref{thm:main}.

The proof method is adjustable, and we provide examples with additional restrictive properties.
Assuming CH, there is an example $X$ such that the infinite power $X^\omega$ (and thus also every finite power)
is hereditarily Lindel\"of (\Cref{thm:CH-hL-power}).
Assuming Jensen's $\diamondsuit$, there is an example with even stronger properties (\Cref{thm:strong-HFCw}).
These results settle a large number of open problems, which we survey in \Cref{sec:conclusion}.

\subsection*{Acknowledgments}
The works of Soukup and Szeptycki~\cite{SoukupSzeptycki2012,SoukupSzeptycki2019} were a major inspiration for this paper.
The present paper grew out of discussions with ChatGPT about my old (2011, perhaps earlier) informal notes on
the D-space problem and related questions.
It took hundreds of hours, thousands of prompts, through several consecutive ChatGPT versions, where both of us made substantial contributions.
I thank my wife for her support and encouragement during this intensive period.
I hope she survives the forthcoming revision periods, too; I also hope that I do.

\section{Auxiliary results}\label{sec:auxiliary}\label{sec:reduction}

Identify the set $\Pow(\omegaone)$ with the Cantor cube $2^\omegaone$ by characteristic functions.
For an ordinal $\alpha<\omegaone$, the subspace
$\Pow(\alpha):=\sset{x\sub\omegaone}{x\sub\alpha}$ is closed and
homeomorphic to the second countable cube $2^\alpha$.
All convergence assertions in this paper are in the Cantor cube.

For a finite set $s=\set{s_0<\cdots<s_{m-1}}\sub\omegaone$ and a subset
$b\sub m$, let $s[b]:=\sset{s_i}{i\in b}$.
Let $[b,s]:=\sset{x\sub\omegaone}{x\cap s=s[b]}$ and, for a family
$F\sub[\omegaone]^m$, let
\[
[b,F]:=\Un_{s\in F}[b,s].
\]
The sets $[b,s]$ are basic clopen sets, and the set $[b,F]$ is open.
When the family $F$ is infinite and its elements are pairwise disjoint,
the set $[b,F]$ is also dense.
If, in addition, the set $F\cap[\alpha]^m$ is infinite, then the set $[b,F]\cap \Pow(\alpha)$
is open dense in the subspace $\Pow(\alpha)$.

\begin{lemma}[{Juh\'asz~\cite[1.1(ii)]{Juhasz2002}}]\label{lem:HL}
A subspace $X\sub\Pow(\omegaone)$ is hereditarily Lindel\"of if and only if,
for each $0<m<\omega$, each subset $b\sub m$, and each pairwise
disjoint family $F\sub[\omegaone]^m$, there is a countable family
$F'\sub F$ such that $X\cap[b,F]\sub[b,F']$.
\end{lemma}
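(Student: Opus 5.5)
Both directions go through the standard characterization that a space $X$ is hereditarily Lindel\"of if and only if every open subset $U$ of $X$ is Lindel\"of. Equivalently, for every family $\mathcal U$ of open sets of the ambient space there is a countable subfamily $\mathcal U'$ with $X\cap\Un\mathcal U\sub\Un\mathcal U'$.

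The forward direction is then immediate. Given $m$, $b\sub m$ and a pairwise disjoint $F\sub[\omegaone]^m$, the set $X\cap[b,F]$ is an open subset of $X$, covered by the relatively open sets $X\cap[b,s]$ for $s\in F$. Lindel\"ofness of this open set yields a countable $F'\sub F$ with $X\cap[b,F]\sub[b,F']$.

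For the converse, I would prove that every family of basic clopen sets has a countable subfamily covering the part of $X$ inside its union. This suffices, since every open $W\sub X$ is $X\cap\Un\mathcal B$ for some family $\mathcal B$ of basic sets $[b,s]$, and a countable subcover of $\mathcal B$ witnesses that $W$ is Lindel\"of.

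So let $\mathcal B=\sset{[b_i,s_i]}{i\in I}$. Since there are only countably many pairs $(m,b)$ with $b\sub m$, we may split $\mathcal B$ accordingly and assume all members have the form $[b,s]$ with a fixed $m$, a fixed $b\sub m$, and $s\in G$ for some $G\sub[\omegaone]^m$. The main obstacle is that $G$ need not be pairwise disjoint. I would handle this by a $\Delta$-system / induction argument on $m$, as follows.

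Suppose the claim fails for $G$, i.e.\ no countable subfamily covers $X\cap[b,G]$. Then we can choose points $x_\xi\in X\cap[b,G]$ and sets $s_\xi\in G$ with $x_\xi\in[b,s_\xi]$, for $\xi<\omegaone$, such that $x_\xi\notin\Un_{\eta<\xi}[b,s_\eta]$. Apply the $\Delta$-system lemma to $\sset{s_\xi}{\xi<\omegaone}$ and pass to an uncountable subfamily that forms a $\Delta$-system with root $r$. Shrinking further, we may assume that the position of $r$ inside each $s_\xi$ is the same, so that each $s_\xi$ splits as $r\cup t_\xi$, where the $t_\xi\in[\omegaone]^{m-|r|}$ are pairwise disjoint. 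Then $[b,s_\xi]=[b_0,r]\cap[b_1,t_\xi]$ for fixed $b_0\sub|r|$ and $b_1\sub m-|r|$ determined by $b$.

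Apply the hypothesis to $b_1$ and the disjoint family $\sset{t_\xi}{\xi\in S}$, where $S$ is the uncountable index set. This gives a countable $S'\sub S$ with $X\cap[b_1,\sset{t_\xi}{\xi\in S}]\sub[b_1,\sset{t_\xi}{\xi\in S'}]$. Now take $\xi\in S$ larger than $\sup S'$. The point $x_\xi$ lies in $[b_0,r]\cap[b_1,t_\xi]$, so by the choice of $S'$ it lies in some $[b_1,t_\eta]$ with $\eta\in S'$, and hence in $[b_0,r]\cap[b_1,t_\eta]=[b,s_\eta]$ with $\eta<\xi$. This contradicts the choice of $x_\xi$.

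If $r=s_\xi$, that is, $m-|r|=0$, the $\Delta$-system would consist of a single set, which is impossible for an uncountable family of distinct sets. If the $s_\xi$ are not distinct, a repeated set directly contradicts the choice of $x_\xi$. This completes the converse.
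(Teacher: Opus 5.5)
Your proof is correct and complete. The paper does not prove this lemma; it quotes it from Juh\'asz. So your argument is a self-contained proof of a cited fact rather than an alternative to an argument given in the text.

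The structure is the standard one. Failure of the covering property for a family $\{[b,s]: s\in G\}$ yields points $x_\xi\in[b,s_\xi]\setminus\bigcup_{\eta<\xi}[b,s_\eta]$, that is, an uncountable right-separated subspace of $X$. The $\Delta$-system lemma, together with the pigeonhole on the position of the root, rewrites $[b,s_\xi]$ as $[b_0,r]\cap[b_1,t_\xi]$ with the $t_\xi$ pairwise disjoint, and a single application of the hypothesis to $\{t_\xi:\xi\in S\}$ gives the contradiction. Your handling of the degenerate cases is right: repeated $s_\xi$ are impossible, and $r=s_\xi$ is impossible, so $m-|r|>0$.

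Two small remarks. First, no induction on $m$ is actually used; the hypothesis is applied exactly once, with $m-|r|$. Second, when you split $\mathcal B$ by the pairs $(m,b)$, set aside the case $m=0$. There $[\emptyset,\emptyset]=\Pow(\omegaone)$, so a single member covers everything, while the hypothesis only speaks about $0<m$.

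Compared with the bare citation, your argument makes visible two things the statement hides. The forward direction needs neither disjointness nor any restriction on $F$. In the converse, it suffices to assume the condition for disjoint families of size $\aleph_1$.
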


In the following lemma, we do not assume that the families $F_n$, for $n<\omega$, are distinct.

\begin{lemma}\label{lem:selection}
Suppose that $I\sub\alpha<\omegaone$, and for each $n<\omega$ we have
$m_n<\omega$, the family $F_n\sub[\alpha]^{m_n}$ is infinite, its
elements are pairwise disjoint, and infinitely many of them are disjoint
from the set $I$.
There is a set $x\sub\alpha$ containing $I$ such that for each $n<\omega$
and each subset $b\sub m_n$, we have $x\in [b,s]$ for infinitely many $s\in F_n$.
\end{lemma}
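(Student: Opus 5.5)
This is a diagonal construction: we build $x$ by deciding, one $s$ at a time, how $x$ meets finitely many new elements of the families $F_n$. For each $n$, let $F_n'$ denote the set of $s\in F_n$ with $s\cap I=\emptyset$. By hypothesis $F_n'$ is infinite, and its elements are pairwise disjoint.

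Enumerate all requirements as a sequence $\langle (n_k,b_k) : k<\omega\rangle$, in which each pair $(n,b)$ with $n<\omega$ and $b\sub m_n$ occurs infinitely often. This is possible because there are only countably many such pairs. Inductively we choose $s_k\in F'_{n_k}$ disjoint from $s_0\cup\cdots\cup s_{k-1}$. We can always do so because each $s_j$ is finite and $F'_{n_k}$ is an infinite family of pairwise disjoint sets, so all but finitely many of its members avoid the finite set $s_0\cup\dots\cup s_{k-1}$. Thus the $s_k$ are pairwise disjoint and disjoint from $I$. We also require $s_k\ne s_j$ for $j<k$ with $n_j=n_k$; this is automatic, since the $s_k$ are pairwise disjoint and nonempty when $m_n>0$. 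If $m_n=0$, then $[b,\emptyset]$ is the whole space and the requirement is trivial.

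Now define
\[
x:=I\cup\Un_{k<\omega} s_k[b_k].
\]
Clearly $x\sub\alpha$ and $I\sub x$. Fix $k$. Since $s_k$ is disjoint from $I$ and from every $s_j$ with $j\ne k$, we have $x\cap s_k=s_k[b_k]$, so $x\in[b_k,s_k]$. Given $n$ and $b\sub m_n$, the pair $(n,b)$ equals $(n_k,b_k)$ for infinitely many $k$. The corresponding sets $s_k\in F_n$ are pairwise distinct, so $x\in[b,s]$ for infinitely many $s\in F_n$.

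The only delicate point is ensuring that later choices do not spoil earlier ones. Disjointness of the chosen $s_k$, both from each other and from $I$, handles this, and the standing hypothesis that infinitely many members of each $F_n$ avoid $I$ is exactly what makes the choice possible. I do not expect any real obstacle beyond this bookkeeping.
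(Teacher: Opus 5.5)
Your proof is correct and is essentially the paper's argument: the paper enumerates the pairs $(n,b)$ with infinite repetition, recursively picks $s_k\in F_{n_k}$ disjoint from $I$ and from $\bigcup_{j<k}s_j$, and sets $x:=I\cup\bigcup_{k<\omega}s_k[b_k]$. One small correction to your aside: the case $m_n=0$ cannot occur, because $F_n\subseteq[\alpha]^0=\{\emptyset\}$ could not be infinite; your stated reason that it is "trivial" is also wrong, since the requirement asks for infinitely many $s\in F_n$.
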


\begin{proof}
Enumerate
\[
\sset{(n,b)}{n<\omega,\ b\sub m_n}=\sset{(n_k,b_k)}{k<\omega}
\]
such that each pair repeats infinitely often.
By recursion, for each $k<\omega$ pick a set $s_k\in F_{n_k}$ disjoint
from the set $I$ and from the finite set $\Un_{j<k}s_j$.
Let $x:=I\cup\Un_{k<\omega}s_k[b_k]$.
Then $x\in [b_k, s_k]$ for all $k<\omega$.
For each $n<\omega$ and each subset $b\sub m_n$, we have
$(n_k,b_k)=(n,b)$,
$x\in [b_k, s_k]=[b,s_k]$, and
$s_k\in F_{n_k}=F_n$ for infinitely many $k<\omega$.
\end{proof}

\begin{proposition}\label{prp:approximation}
Suppose that $0<m<\omega$, $F\sub[\omegaone]^m$, and
$x_\alpha\in[\omegaone]^{\le\omega}$ for $\alpha<\omegaone$.
For each ordinal $\alpha<\omegaone$, there is an ordinal
$\alpha<\delta<\omegaone$ such that, for each set
$t\in F\setminus[\delta]^m$, there are distinct sets
$s_0,s_1,\dotsc\in F\cap[\delta]^m$ such that
$x_{s_{n,i}}\longrightarrow x_{t_i}\cap\delta$ for all $i<m$.
\end{proposition}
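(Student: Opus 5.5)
The plan is a closing-off argument. I will read $s_n=\set{s_{n,0}<\dots<s_{n,m-1}}$ and $t=\set{t_0<\dots<t_{m-1}}$, and choose $\delta$ so that two things hold. First, $\delta$ is \emph{closed} under the assignment $\beta\mapsto x_\beta$, meaning $x_\beta\sub\delta$ for all $\beta<\delta$. Second, $\delta$ \emph{reflects} every finitary existential statement about $F$ and the sets $x_\beta$ whose parameters lie below $\delta$.

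The cleanest way to get such a $\delta$ is to take a countable elementary submodel $M\prec H(\theta)$, for a large enough regular $\theta$, with $F$, $\langle x_\beta:\beta<\omegaone\rangle$ and $\alpha$ in $M$, and set $\delta:=M\cap\omegaone$. Then $\delta$ is a countable ordinal with $\delta>\alpha$. Moreover, for $\beta<\delta$ the set $x_\beta$ is a countable element of $M$, so $x_\beta\sub M$, and hence $x_\beta\sub\delta$. If one prefers to avoid submodels, the same $\delta$ can be obtained as the limit of an increasing $\omega$-sequence of countable ordinals. At each stage one closes under $\beta\mapsto\sup(x_\beta)+1$ and adds witnesses for each of the countably many requirements of the following form: a finite set $a$, a pattern $(p_i)_{i<m}$ of subsets of $a$, and a finite set $E\sub[\omegaone]^m$, all with entries below the current ordinal. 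The requirement is a witness $s\in F\setminus E$ with $x_{s_i}\cap a=p_i$ for all $i<m$, whenever such an $s$ exists.

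Now fix $t\in F\setminus[\delta]^m$. Since $\delta$ is countable, write $\delta=\Un_n a_n$ with finite sets $a_0\sub a_1\sub\cdots$. Recursively choose $s_n$ as follows. Consider the statement
\[
\exists s\in F\setminus\set{s_0,\dots,s_{n-1}}\ \forall i<m:\ x_{s_i}\cap a_n=x_{t_i}\cap a_n .
\]
All its parameters, namely $a_n$, the patterns $x_{t_i}\cap a_n\sub a_n$, and $s_0,\dots,s_{n-1}\in[\delta]^m$, lie in $M$. The statement is true in $H(\theta)$, witnessed by $t$ itself: indeed $t\neq s_j$ because $t\not\sub\delta$. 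By elementarity there is a witness $s_n\in M\cap F$. As $s_n$ is a finite subset of $\omegaone$ lying in $M$, we get $s_n\sub\delta$, that is, $s_n\in F\cap[\delta]^m$. The sets $s_n$ are distinct by construction.

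It remains to check convergence coordinatewise in $2^\omegaone$. For $\gamma<\delta$, we have $\gamma\in a_n$ for all large $n$, so eventually $\gamma\in x_{s_{n,i}}$ if and only if $\gamma\in x_{t_i}\cap\delta$. For $\gamma\ge\delta$, closure gives $x_{s_{n,i}}\sub\delta$ because $s_{n,i}<\delta$, so $\gamma$ lies neither in $x_{s_{n,i}}$ nor in $x_{t_i}\cap\delta$. Hence $x_{s_{n,i}}\longrightarrow x_{t_i}\cap\delta$ for each $i<m$.

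The only real subtlety is the role of closure. Convergence to the \emph{truncation} $x_{t_i}\cap\delta$ in the full Cantor cube requires that the approximating sets contribute nothing at or above $\delta$, and this is exactly what the closure condition $x_\beta\sub\delta$ for $\beta<\delta$ provides. Without it, elementarity alone would give convergence only on $\Pow(\delta)$-coordinates.
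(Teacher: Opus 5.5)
Your proof is correct. Like the paper's, it is a closing-off argument, but it is packaged differently in two respects.

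The paper builds $\delta=\sup_n\alpha_n$ by hand. For each finite $a\sub\alpha_n$ it picks finitely many representatives $R_a\sub F\setminus[\alpha_n]^m$ realizing every pattern $(x_{t_0}\cap a,\dotsc,x_{t_{m-1}}\cap a)$ with $t\in F\setminus[\alpha_n]^m$. It then chooses $\alpha_{n+1}$ above these representatives together with their sets $x_{s_i}$. As a result, distinctness of the $s_n$ comes for free, since $\max s_n\in[\alpha_n,\alpha_{n+1})$. Closure is imposed only on the $x_{s_i}$ of the chosen representatives, which is all the convergence argument needs.

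You instead take $\delta=M\cap\omegaone$. You build distinctness into the existential statement by excluding $s_0,\dots,s_{n-1}$, using $t\not\sub\delta$ to see that $t$ itself is a witness. You also get the stronger closure $x_\beta\sub\delta$ for all $\beta<\delta$ automatically.

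The paper's route is fully elementary and self-contained. Yours is shorter and shows that every $\delta$ of the form $M\cap\omegaone$ works. Taking $M$ along a continuous $\in$-chain of countable elementary submodels then immediately yields a club of good ordinals. This is essentially the content of \Cref{prp:approximation-club}, though not the closedness of the full set $D$ proved there.

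Your submodel-free variant is essentially the paper's construction, with the same two differences: distinctness via excluding a finite set $E$, and closure under all of $\beta\mapsto x_\beta$.
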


\begin{proof}
Fix $\alpha<\omegaone$ and let $\alpha_0:=\alpha$.
Given an ordinal $\alpha_n<\omegaone$ and a finite subset $a\sub\alpha_n$,
the set
\[
P:=\sset{(x_{t_0}\cap a,\dotsc,x_{t_{m-1}}\cap a)}{t\in F\setminus[\alpha_n]^m}
\]
is finite.
Pick a finite set of representatives $R_a\sub F\setminus[\alpha_n]^m$ such that
for each element $(x_{t_0}\cap a,\dotsc,x_{t_{m-1}}\cap a)\in P$ there is an element $s\in R_a$
with
\begin{equation}
\label{rep}
(x_{s_0}\cap a,\dotsc,x_{s_{m-1}}\cap a)=(x_{t_0}\cap a,\dotsc,x_{t_{m-1}}\cap a).
\end{equation}
Let $F_n:=\Un_{a\in[\alpha_n]^{<\omega}}R_a\sub F\setminus[\alpha_n]^m$.
Then the family $F_n$ is countable, and for each set
$t\in F\setminus[\alpha_n]^m$ and each finite set $a\sub\alpha_n$,
some $s\in F_n$ satisfies \Cref{rep}.

Choose an ordinal $\alpha_n<\alpha_{n+1}<\omegaone$ with
\[
\Un_{s\in F_n} ( s\cup x_{s_0}\cup\dotsb\cup x_{s_{m-1}})\sub\alpha_{n+1}.
\]
Let $\delta:=\sup_{n<\omega}\alpha_n$.

Choose increasing finite sets $a_0\sub a_1\sub \dotsb$ such that $a_n\sub\alpha_n$ for all $n<\omega$ and
$\Un_{n<\omega}a_n=\delta$.
Let $t\in F\setminus[\delta]^m$.
For each $n<\omega$, there is a set $s_n\in F_n$ such that \Cref{rep} holds for $s:=s_n$ and $a:=a_n$.
Since $\max s_n\in[\alpha_n,\alpha_{n+1})$ for all $n<\omega$, the sets $s_n$ are distinct.

Let $i<m$.
We have $x_{s_{n,i}}\sub\delta$.
For each coordinate $\xi\in x_{t_i}\cap\delta$, for all but finitely many $n$, we have $\xi\in a_n$ and thus
\[
 \xi \in x_{t_i}\cap a_n = x_{s_{n,i}}\cap a_n.
\]
It follows that $\xi\in x_{s_{n,i}}$ for all but finitely many $n$.
Similarly, for each coordinate $\xi<\delta$ with $\xi\notin x_{t_i}\cap\delta$, we have $\xi\notin x_{s_{n,i}}$ for all but finitely many $n$.
Thus, $x_{s_{n,i}}\longrightarrow x_{t_i}\cap\delta$ in $\Pow(\delta)$ and thus in $\Pow(\omegaone)$.
\end{proof}

The following proposition is not needed for the proof of the main theorem.
It is only used in the result about $\diamond$ that is announced later.

\begin{proposition}\label{prp:approximation-club}
Under the hypotheses of \Cref{prp:approximation}, the set $D$ of ordinals
$\delta<\omegaone$ satisfying its approximation conclusion is a club.
\end{proposition}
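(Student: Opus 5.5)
The plan is to prove the two halves of ``club'' separately. Unboundedness is exactly \Cref{prp:approximation}: for each $\alpha<\omegaone$ it gives some $\delta\in D$ with $\delta>\alpha$. So the work is to show that $D$ is closed, that is, that $\delta:=\sup_{k<\omega}\delta_k$ lies in $D$ whenever $\delta_0<\delta_1<\dotsb$ are in $D$. The idea is to diagonalize the approximating sequences supplied at the levels $\delta_k$.

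Fix $t\in F\setminus[\delta]^m$. Since $[\delta_k]^m\sub[\delta]^m$, also $t\in F\setminus[\delta_k]^m$ for every $k$. So, because $\delta_k\in D$, there are distinct sets $s^k_0,s^k_1,\dotsc\in F\cap[\delta_k]^m$ with $x_{s^k_{n,i}}\longrightarrow x_{t_i}\cap\delta_k$ for all $i<m$. Here convergence is in the full cube $\Pow(\omegaone)$. Each such $s^k_n$ lies in $[\delta]^m$.

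To choose the diagonal sequence, let
\[
C:=\delta\cup\Un_{k,n<\omega}\ \Un_{i<m}x_{s^k_{n,i}},
\]
which is countable because every $x_\beta$ is countable. Choose finite sets $a_0\sub a_1\sub\dotsb$ with $\Un_k a_k=C$ and $a_k\cap\delta\sub\delta_k$. This is possible because $\delta_k$ increases to $\delta$: enumerate $C$, and at stage $k$ add the next element only once it lies outside $\delta$ or below $\delta_k$, so every element is eventually added. By the convergence at level $k$ and finiteness of $a_k$, we can pick $n_k$ with
\[
x_{s^k_{n_k,i}}\cap a_k=x_{t_i}\cap\delta_k\cap a_k=x_{t_i}\cap\delta\cap a_k\quad\text{for all } i<m,
\]
where the second equality uses $a_k\cap\delta\sub\delta_k$. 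Since infinitely many $n$ work and the $s^k_n$ are distinct, we may also take $s_k:=s^k_{n_k}$ different from $s_0,\dotsc,s_{k-1}$.

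It remains to verify that $x_{s_{k,i}}\longrightarrow x_{t_i}\cap\delta$ for each $i<m$. For a coordinate $\xi\in C$, we have $\xi\in a_k$ for all large $k$, so membership of $\xi$ in $x_{s_{k,i}}$ eventually agrees with membership in $x_{t_i}\cap\delta$. For $\xi\notin C$, we have $\xi\notin x_{s_{k,i}}$ for all $k$, and $\xi\notin x_{t_i}\cap\delta$ because $\delta\sub C$. Hence $\delta\in D$. The only delicate point, and the main obstacle, is that convergence must hold at the uncountably many coordinates $\ge\delta$ too. This is why the exhausting finite sets $a_k$ are taken inside the countable set $C$ rather than inside $\delta$.
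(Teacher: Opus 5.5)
Your proof is correct and follows essentially the paper's argument: unboundedness comes from \Cref{prp:approximation}, and closure comes from diagonalizing the approximating sequences at the levels $\delta_k$ against increasing finite sets that exhaust a countable set containing $\delta$ and all relevant $x_\beta$. The differences are cosmetic: the paper uses a countable ordinal $\gamma\supseteq\delta\cup\bigcup_{\beta<\delta}x_\beta$ in place of your $C$, and it drops your constraint $a_k\cap\delta\subseteq\delta_k$, since any fixed $\xi<\delta$ is eventually below $\delta_k$ anyway.
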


\begin{proof}
By \Cref{prp:approximation}, the set $D$ is unbounded.
It is also closed:
Let $\delta<\omegaone$ be a limit point of $D$, and $t\in F\setminus[\delta]^m$.
Let $\gamma<\omegaone$ be an ordinal with
$\delta\cup\Un_{\beta<\delta}x_\beta\sub\gamma$.
Choose increasing finite sets $a_0\sub a_1\sub \dotsb$ such that $\Un_{n<\omega}a_n=\gamma$.

Choose ordinals $\delta_0<\delta_1<\dotsb$ in $D$ with $\delta=\sup_{n<\omega}\delta_n$.
For each $n<\omega$, since $\delta_n\in D$ there are infinitely many sets $s\in F\cap[\delta_n]^m$ such that
\[
x_{s_i}\cap a_n = x_{t_i}\cap \delta_n\cap a_n
\]
for all $i<m$.
Pick such a set $s_n$ that is distinct from the sets $s_0,\dotsc,s_{n-1}$.
Then
\[
x_{s_{n,i}}\longrightarrow x_{t_i}\cap\delta
\]
for all $i<m$.
\end{proof}

In the following lemma, we do not assume that the enumeration of the
subspace is bijective.

\begin{lemma}\label{lem:nonD}
Suppose that the subspace $X:=\sset{x_\alpha}{\alpha<\omegaone}\sub\Pow(\omegaone)$
is Lindel\"of, $x_\alpha\sub\alpha$ for all $\alpha<\omegaone$, and for each
countable set $I\sub\omegaone$ such that the points $x_\alpha$, for $\alpha\in I$,
are distinct and form a closed discrete subspace of $X$, there is a point
in $X$ that contains the set $I$.
Then the subspace $X$ is not a D-space.
\end{lemma}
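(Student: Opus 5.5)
We will exhibit a neighborhood assignment on $X$ for which no closed discrete set yields a subcover. For each point $y\in X$, choose the least ordinal $\alpha$ with $y=x_\alpha$, and set
\[
N(y):=\sset{z\in X}{\alpha\notin z}.
\]
This is open in $X$, since it is the trace of a basic clopen set of the Cantor cube. It contains $y$, because $y=x_\alpha\sub\alpha$ and hence $\alpha\notin y$. So $\sset{N(y)}{y\in X}$ is a neighborhood assignment.

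Suppose, toward a contradiction, that $D\sub X$ is closed discrete and $\sset{N(y)}{y\in D}$ covers $X$. Since $X$ is Lindel\"of, the closed subspace $D$ is Lindel\"of. Being also discrete, $D$ is countable. Let $I$ be the set of least indices of the points of $D$. Then $I$ is a countable subset of $\omegaone$, the points $x_\alpha$ for $\alpha\in I$ are distinct (each point receives exactly one least index), and $\sset{x_\alpha}{\alpha\in I}=D$ is closed and discrete in $X$. The hypothesis of \Cref{lem:nonD} therefore gives a point $z\in X$ with $I\sub z$. But then $z\notin N(y)$ for every $y\in D$, since $z$ contains the index of $y$. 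This contradicts the assumption that $\sset{N(y)}{y\in D}$ covers $X$, so $X$ is not a D-space.

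The only delicate point is the lack of injectivity of the enumeration. Choosing least indices makes both the neighborhood assignment and the set $I$ well defined, and it guarantees that the indexed points are distinct, which is exactly what the hypothesis requires. The remaining steps are routine: closed discrete subsets of a Lindel\"of space are countable, and $[\{\alpha\}\notin]$-type sets, that is, sets of the form $\sset{z}{\alpha\notin z}$, are open.
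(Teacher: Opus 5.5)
Your proof is correct and follows the paper's argument: choosing the least index for each point is a specific way of choosing the paper's set $J$ of unique representatives. The rest matches as well: the neighborhoods $\{z\in X: \alpha\notin z\}$, the countability of a closed discrete subset of a Lindel\"of space, and a point containing all indices of $D$ that escapes every assigned neighborhood.
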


\begin{proof}
By choosing unique representatives, we have a set $J\sub\omegaone$
such that $X=\sset{x_\alpha}{\alpha\in J}$ and $x_\alpha\ne x_\beta$ for all
distinct indices $\alpha,\beta\in J$.
For each index $\alpha\in J$, since $x_\alpha\sub\alpha$, the set $N(x_\alpha):=\sset{x\in X}{\alpha\notin x}$
is a basic clopen neighborhood of the point $x_\alpha$.
Let $D=\sset{x_\alpha}{\alpha\in I}\sub X$, with $I\sub J$, be closed and discrete.
Since the space $X$ is Lindel\"of, the set $I$ is countable.
By the hypothesis, there is an index $\gamma\in J$ with $I\sub x_\gamma$.
Then for each index $\alpha\in I$, we have $x_\gamma\notin N(x_\alpha)$.
\end{proof}

\section{\texorpdfstring{Proof of \Cref{thm:main}}{Proof of the main theorem}}
\label{sec:CH}\label{sec:proof-main}

Assume the Continuum Hypothesis.
Enumerate
\begin{align*}
[\omegaone]^{\le\omega} &= \sset{q_\alpha}{\alpha<\omegaone},\\
[[\omegaone]^{<\omega}]^{\le\omega} &= \sset{F_\alpha}{\alpha<\omegaone},
\end{align*}
such that each element occurs cofinally often in the enumeration, and
$F_\alpha\sub[\alpha]^{<\omega}$ for all $\alpha<\omegaone$.
For the latter requirement, begin with an enumeration without this
requirement, and replace each family $F_\alpha$ with
$F_\alpha\cap[\alpha]^{<\omega}$.
Every countable family $F$ of finite subsets of $\omegaone$ is contained in $[\alpha]^{<\omega}$ for all large enough $\alpha<\omegaone$,
and thus still appears cofinally often in the enumeration.

We define points $x_\alpha\sub\alpha$ and open dense sets $O_\alpha\sub\Pow(\omegaone)$
by recursion on $\alpha<\omegaone$, allowing repetitions.
Each set $O_\alpha$ is either $\Pow(\omegaone)$ or has the form
$\bigcap_{b\sub m}[b,F_\alpha]$, where $0<m<\omega$ and the family
$F_\alpha\sub[\alpha]^m$ is infinite and its elements are pairwise disjoint.

At stage $\alpha$, we have the points $x_\beta$ and the open sets $O_\beta$ for $\beta<\alpha$.
Let
\[
G_\alpha:=\bigcap_{\beta<\alpha}O_\beta,
\]
where
$G_0:=\Pow(\omegaone)$.
We have $\emptyset\in G_0\cap\Pow(0)$.
If $q_\alpha\in \Un_{\beta\le \alpha}G_{\beta}\cap\Pow(\beta)$, let
\[
x_\alpha:=q_\alpha.
\]
Otherwise, set $x_\alpha:=\emptyset$.
We have $x_\alpha\sub\alpha$.

Suppose that for $0<m<\omega$ we have $F_\alpha\sub[\alpha]^m$, the elements of the family $F_\alpha$ are
pairwise disjoint, and there are distinct sets
$s_0,s_1,\dotsc\in F_\alpha$, and points
\[
p_0,\dotsc,p_{m-1}\in \Un_{\beta\le \alpha}G_{\beta}\cap\Pow(\beta)
\]
such that
\[
x_{s_{n,i}}\longrightarrow p_i
\]
for each $i<m$.
In this case, we call this stage \emph{active} and define
\[
O_\alpha:=\bigcap_{b\sub m}[b,F_\alpha].
\]
Otherwise, set $O_\alpha:=\Pow(\omegaone)$.
The set $O_\alpha$ is open and dense.
This completes the construction.

Let $X:=\sset{x_\alpha}{\alpha<\omegaone}$.
As a subspace of the Cantor cube, the space $X$ is zero-dimensional and thus regular.

\begin{claim}\label{cl:enumeration}
$X=\Un_{\alpha<\omegaone}G_\alpha\cap\Pow(\alpha)$.
\end{claim}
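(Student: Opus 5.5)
We prove the two inclusions separately; both rest on the definition of $x_\alpha$ and on the requirement that each element of $[\omegaone]^{\le\omega}$ occurs cofinally often in the enumeration $\sset{q_\alpha}{\alpha<\omegaone}$.

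\emph{The inclusion $\sub$.} Fix $\alpha<\omegaone$ and split into the two cases of the construction.
\begin{itemize}
\item If $q_\alpha\in\Un_{\beta\le\alpha}G_\beta\cap\Pow(\beta)$, then $x_\alpha=q_\alpha$ lies in the right-hand side.
\item Otherwise $x_\alpha=\emptyset$. We noted $\emptyset\in G_0\cap\Pow(0)$, so $\emptyset$ lies in the right-hand side with $\beta=0$.
\end{itemize}
This inclusion is immediate.

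\emph{The inclusion $\supseteq$.} Let $x\in G_\beta\cap\Pow(\beta)$ for some $\beta<\omegaone$. Then $x\sub\beta$, so $x$ is countable and $x\in[\omegaone]^{\le\omega}$. By the cofinality of the enumeration, there is an index $\alpha\ge\beta$ with $q_\alpha=x$. Then $q_\alpha\in G_\beta\cap\Pow(\beta)$ with $\beta\le\alpha$. Hence the first case of the definition applies at stage $\alpha$, so $x_\alpha=q_\alpha=x\in X$.

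No step is a real obstacle. The one point needing care is to choose the index $\alpha\ge\beta$, rather than an arbitrary index with $q_\alpha=x$, so that $\beta$ falls within the union $\Un_{\beta'\le\alpha}$ tested at stage $\alpha$. Cofinal repetition of each element in the enumeration guarantees such an $\alpha$ exists.
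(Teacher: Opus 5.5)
Your proposal is correct and follows the paper's proof essentially verbatim: $\subseteq$ by the two cases of the definition of $x_\alpha$ (using $\emptyset\in G_0\cap\Pow(0)$), and $\supseteq$ by choosing a repetition index $\alpha$ of $x$ at or beyond $\beta$. The only differences are that the paper takes $\alpha>\beta$ rather than $\alpha\ge\beta$, which is immaterial, and that you make explicit the point, left implicit in the paper, that $x\sub\beta$ is countable and hence occurs in the enumeration of $[\omegaone]^{\le\omega}$.
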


\begin{proof}
($\sub$) For each $\alpha<\omegaone$, either $x_\alpha=q_\alpha\in \Un_{\beta\le\alpha}G_\beta\cap\Pow(\beta)$,
or $x_\alpha=\emptyset\in G_0\cap\Pow(0)$.

($\supseteq$) Let $\beta<\omegaone$ and $x\in G_\beta\cap\Pow(\beta)$.
There are cofinally many ordinals $\alpha<\omegaone$ with $x=q_\alpha$.
Pick one that is greater than $\beta$.
Then $q_\alpha=x\in G_\beta\cap\Pow(\beta)$, and by the construction we have
$x_\alpha=q_\alpha=x$.
Thus $x\in X$.
\end{proof}

\begin{claim}\label{cl:truncations}
For each point $x\in X$ and each ordinal $\gamma<\omegaone$, we have
$x\cap\gamma\in\Un_{\beta\le\gamma}G_\beta\cap\Pow(\beta)$.
\end{claim}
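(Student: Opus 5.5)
The claim follows from \Cref{cl:enumeration} together with one observation: every set $O_\beta$ is determined by coordinates below $\beta$. By \Cref{cl:enumeration}, fix an ordinal $\alpha<\omegaone$ with $x\in G_\alpha\cap\Pow(\alpha)$. I will split into two cases according to whether $\gamma\ge\alpha$ or $\gamma<\alpha$.

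\emph{Case $\gamma\ge\alpha$.} Since $x\sub\alpha\sub\gamma$, we have $x\cap\gamma=x\in G_\alpha\cap\Pow(\alpha)$. Taking $\beta:=\alpha\le\gamma$ settles this case.

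\emph{Case $\gamma<\alpha$.} I will show directly that $x\cap\gamma\in G_\gamma\cap\Pow(\gamma)$, that is, take $\beta:=\gamma$. Clearly $x\cap\gamma\sub\gamma$, so it remains to check that $x\cap\gamma\in O_\beta$ for every $\beta<\gamma$.

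Fix $\beta<\gamma$. If stage $\beta$ is not active, then $O_\beta=\Pow(\omegaone)$ and there is nothing to prove. Otherwise $O_\beta=\bigcap_{b\sub m}[b,F_\beta]$ with $F_\beta\sub[\beta]^m$. Since $\beta<\gamma<\alpha$, we have $x\in G_\alpha\sub O_\beta$. So for each $b\sub m$ there is a set $s\in F_\beta$ with $x\cap s=s[b]$. As $s\sub\beta\sub\gamma$, we get
\[
(x\cap\gamma)\cap s=x\cap s=s[b],
\]
so $x\cap\gamma\in[b,s]\sub[b,F_\beta]$. Hence $x\cap\gamma\in O_\beta$.

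The only point requiring care is exactly this locality: membership in $O_\beta$ depends only on the trace of a set on $\beta$. That holds because the construction arranged $F_\beta\sub[\beta]^{<\omega}$. Beyond this, I expect no real obstacle; the argument is bookkeeping.
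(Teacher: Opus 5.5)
Your proof is correct and follows essentially the same route as the paper. It uses the same case split via \Cref{cl:enumeration} ($\alpha\le\gamma$ versus $\gamma<\alpha$), and the same locality observation: $F_\beta\sub[\beta]^m$ gives $(x\cap\gamma)\cap s=x\cap s$ for all $s\in F_\beta$, hence $x\cap\gamma\in G_\gamma\cap\Pow(\gamma)$.
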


\begin{proof}
Let $x\in X$ and $\gamma<\omegaone$.
By \cref{cl:enumeration}, there is an ordinal
$\alpha<\omegaone$ with $x\in G_\alpha\cap\Pow(\alpha)$.

If $\alpha\le\gamma$, then
$x\cap\gamma=x\in G_\alpha\cap\Pow(\alpha)$,
and we are done.

Thus, suppose that $\gamma<\alpha$.
For each ordinal $\beta<\gamma$, we have $x\in G_\alpha\sub O_\beta$.
If $O_\beta=\Pow(\omegaone)$ then $x\cap\gamma\in O_\beta$.
Suppose now that $O_\beta\ne\Pow(\omegaone)$.
Then stage $\beta$ is active, so for some $0<m<\omega$ we have
\[
O_\beta=\bigcap_{b\sub m}[b,F_\beta].
\]
Since every set $s\in F_\beta$ is a subset of $\beta<\gamma$,
we have $(x\cap\gamma)\cap s=x\cap s$, and thus
$x\cap\gamma\in O_\beta$ in this case, too.
It follows that $x\cap\gamma\in G_\gamma$, and thus
$x\cap\gamma\in G_\gamma\cap\Pow(\gamma)$.
\end{proof}

\begin{claim}\label{cl:HL}
The space $X$ is hereditarily Lindel\"of.
\end{claim}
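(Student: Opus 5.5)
We will verify the criterion of \Cref{lem:HL}. Fix $0<m<\omega$, $b\sub m$, and a pairwise disjoint family $F\sub[\omegaone]^m$; we need a countable $F'\sub F$ with $X\cap[b,F]\sub[b,F']$. If $F$ is countable there is nothing to prove, so assume $F$ is uncountable. Apply \Cref{prp:approximation} to $F$ and the enumeration $x_\alpha$ (each $x_\alpha\sub\alpha$ is countable), iterating it so that the ordinals obtained are cofinal: for each $\alpha$ there is $\delta>\alpha$ with the approximation property. Since $F\cap[\delta]^m$ is a countable family of finite sets, it appears cofinally often as some $F_\gamma$. The plan is to find a suitable $\delta$ and a stage $\gamma\ge\delta$ with $F_\gamma=F\cap[\delta]^m$ (which is infinite, because the approximating sequences consist of distinct sets, and is pairwise disjoint), and then take $F':=F\cap[\delta]^m$.

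The key step is to show that stage $\gamma$ is active. Since $F$ is uncountable and disjoint, choose $t\in F\setminus[\delta]^m$. By \Cref{prp:approximation} there are distinct $s_0,s_1,\dotsc\in F\cap[\delta]^m=F_\gamma$ with $x_{s_{n,i}}\to x_{t_i}\cap\delta$ for every $i<m$. By \Cref{cl:truncations}, $p_i:=x_{t_i}\cap\delta\in\Un_{\beta\le\delta}G_\beta\cap\Pow(\beta)\sub\Un_{\beta\le\gamma}G_\beta\cap\Pow(\beta)$. Hence stage $\gamma$ is active, and $O_\gamma=\bigcap_{b'\sub m}[b',F_\gamma]\sub[b,F']$.

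It remains to show that every $x\in X$ is in $O_\gamma$, or at least that every $x\in X\cap[b,F]$ lies in $[b,F']$. By \Cref{cl:enumeration}, $x\in G_\alpha\cap\Pow(\alpha)$ for some $\alpha$. If $\alpha>\gamma$, then $x\in G_\alpha\sub O_\gamma\sub[b,F']$. The remaining points, those with $\alpha\le\gamma$, form a countable set $Y=\Un_{\alpha\le\gamma}G_\alpha\cap\Pow(\alpha)\sub\Pow(\gamma)$. This is the main obstacle: such points escape $O_\gamma$. I would handle it by adding countably many members to $F'$. Each $x\in Y\cap[b,F]$ lies in $[b,s_x]$ for some $s_x\in F$, so put $F':=(F\cap[\delta]^m)\cup\sset{s_x}{x\in Y\cap[b,F]}$, which is still countable. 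Note that the choice of $\gamma$ does not depend on this enlargement, since activity was already witnessed. Then $X\cap[b,F]\sub[b,F']$, and \Cref{lem:HL} yields that $X$ is hereditarily Lindel\"of.

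A point to double-check is that $p_i$ must lie in $\Un_{\beta\le\gamma}G_\beta\cap\Pow(\beta)$ with the union index bounded by $\gamma$, and not merely by $\delta$. This holds because $\delta\le\gamma$ and the union is monotone in its bound. We also need $F_\gamma\sub[\gamma]^m$, which is true because $\delta\le\gamma$.
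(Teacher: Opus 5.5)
Your strategy is the paper's: apply \Cref{lem:HL}, take $\delta$ from \Cref{prp:approximation}, pick a stage $\gamma\ge\delta$ enumerating a countable subfamily that contains the approximating sequence, and check that the stage is active. The paper uses $B=\sset{s_n}{n<\omega}$ instead of $F\cap[\delta]^m$, which makes no difference. Your activity argument is correct.

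The gap is in the last step. The set $Y=\Un_{\alpha\le\gamma}G_\alpha\cap\Pow(\alpha)$ is not countable. For $\beta<\omega$ the family $F_\beta\sub[\beta]^{<\omega}$ is finite, so no stage below $\omega$ is active. Hence $G_\omega=\Pow(\omegaone)$. Since $\gamma\ge\delta\ge\omega$, we get $Y\supseteq G_\omega\cap\Pow(\omega)=\Pow(\omega)$, which has size continuum. More generally, each $G_\alpha\cap\Pow(\alpha)$ is typically comeager in $\Pow(\alpha)\cong 2^\alpha$. So choosing one $s_x\in F$ for each $x\in Y\cap[b,F]$ may give uncountably many members of $F$, and your $F'$ need not be countable.

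The fix, which is what the paper does, uses second countability rather than cardinality. We have $Y\cap[b,F]\sub\Pow(\gamma)\cap[b,F]$, and $\Pow(\gamma)$ is second countable, hence hereditarily Lindel\"of. So the cover of $\Pow(\gamma)\cap[b,F]$ by the open sets $[b,s]$, for $s\in F$, has a countable subcover. Adding those countably many $s$ to $F\cap[\delta]^m$ gives the required countable $F'\sub F$ with $X\cap[b,F]\sub[b,F']$. With this change your argument is complete.
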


\begin{proof}
We prove that \cref{lem:HL} applies.
Let $0<m<\omega$ and $b\sub m$, and let $F\sub[\omegaone]^m$ be an uncountable family whose elements are pairwise disjoint.
Let $\delta$ be the ordinal supplied by \cref{prp:approximation} and
$t\in F\setminus[\delta]^m$.
There are distinct sets $s_0,s_1,\dotsc\in F\cap[\delta]^m$ such that
\[
x_{s_{n,i}}\longrightarrow p_i:=x_{t_i}\cap\delta\in X
\]
for each $i<m$.

By \cref{cl:truncations}, for each ordinal $\delta\le\alpha<\omegaone$ we have
\[
p_i=p_i\cap\alpha\in\Un_{\beta\le\alpha}G_\beta\cap\Pow(\beta)
\]
for all $i<m$.
Let $B:=\sset{s_n}{n<\omega}$ and choose a stage $\alpha\ge\delta$
with $F_\alpha=B$.
Then this stage is active, and thus $O_\alpha\sub[b,B]$.

We have $X\setminus O_\alpha\sub\Pow(\alpha)$:
Let $x\in X\setminus O_\alpha$.
By \cref{cl:enumeration}, there is an ordinal $\beta<\omegaone$
with $x\in G_\beta\cap\Pow(\beta)$.
If $\alpha<\beta$, then $G_\beta\sub O_\alpha$ and thus $x\in O_\alpha$, a contradiction.
Thus $\beta\le\alpha$, and $x\sub\beta\sub\alpha$.

Since $\alpha<\omegaone$, the subspace $\Pow(\alpha)\cap[b,F]$ is second countable, and thus Lindel\"of.
It follows that its cover by the sets $[b,s]$, for $s\in F$, has a countable subcover.
Adding the corresponding members of $F$ to the set $B$ gives a countable family
$F'\sub F$ with $X\cap[b,F]\sub[b,F']$.
\end{proof}

\begin{claim}\label{cl:nonD}
The space $X$ is not a D-space.
\end{claim}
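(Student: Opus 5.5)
We will apply \cref{lem:nonD}. The space $X$ is Lindel\"of by \cref{cl:HL}, and $x_\alpha\sub\alpha$ holds by construction. So it remains to take a countable set $I\sub\omegaone$ such that the points $x_\alpha$, for $\alpha\in I$, are distinct and form a closed discrete subspace of $X$, and to find a point of $X$ containing $I$.

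If $I$ is finite, the point $I$ itself works. Let $\gamma>\sup I$, so $I\sub\gamma$. By \cref{cl:enumeration}, every set in $G_\beta\cap\Pow(\beta)$ lies in $X$. It therefore suffices to find an ordinal $\beta$ and a set $x\sub\beta$ with $I\sub x$ and $x\in O_\xi$ for all $\xi<\beta$. We take $\beta:=\gamma$. The only nontrivial constraints come from the countably many active stages $\xi<\gamma$, where
\[
O_\xi=\bigcap_{b\sub m_\xi}[b,F_\xi]
\]
with $F_\xi\sub[\xi]^{m_\xi}$ infinite and consisting of pairwise disjoint sets. By \cref{lem:selection}, applied with $\alpha:=\gamma$ and with the families $F_\xi$ for the active stages $\xi<\gamma$, we obtain $x\sub\gamma$ with $I\sub x$ and $x\in[b,s]$ for infinitely many $s\in F_\xi$, for every $b$. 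Hence $x\in O_\xi$ for all $\xi<\gamma$, so $x\in G_\gamma\cap\Pow(\gamma)\sub X$, as required. If there are no active stages, any enumeration with repetitions works, and \cref{lem:selection} allows repetitions.

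The main obstacle is checking the hypothesis of \cref{lem:selection}: each active family $F_\xi$ must have infinitely many members disjoint from $I$. This is where closed discreteness of $\sset{x_\alpha}{\alpha\in I}$ is used. Since the members of $F_\xi$ are pairwise disjoint, each point of $I$ lies in at most one of them. So if only finitely many members avoided $I$, infinitely many members would meet $I$. In particular, along the witnessing sequence $s_0,s_1,\dotsc$ from the activity of stage $\xi$, for all but finitely many $n$ some coordinate $s_{n,i_n}$ lies in $I$. Passing to a subsequence, we may fix the index $i$, so that $s_{n,i}\in I$ for all $n$, and these elements are distinct because the $s_n$ are disjoint. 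Activity gives $x_{s_{n,i}}\longrightarrow p_i$, and $p_i\in\Un_{\beta\le\xi}G_\beta\cap\Pow(\beta)\sub X$ by \cref{cl:enumeration}. Thus infinitely many distinct points $x_{s_{n,i}}$, for distinct indices in $I$, converge to a point of $X$.

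This contradicts closed discreteness of $\sset{x_\alpha}{\alpha\in I}$, provided the points $x_{s_{n,i}}$ are pairwise distinct, which holds by the distinctness assumption on $I$. Convergence to $p_i$ is impossible in both cases: if $p_i$ is outside the set, it would be a limit point of a closed set; if $p_i$ is inside, a neighborhood of $p_i$ would contain infinitely many points of a discrete set. The subtle point is that the hypothesis is needed only for the activity witnesses $s_n$, not for all of $F_\xi$. It is enough that infinitely many of the $s_n$ avoid $I$, and these then supply the infinitely many members of $F_\xi$ disjoint from $I$.
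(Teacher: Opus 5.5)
Your argument is the paper's. You play closed discreteness of $\sset{x_\alpha}{\alpha\in I}$ against the convergent witnesses of activity to show that every active family has infinitely many members disjoint from $I$; your pigeonhole on the coordinate $i$ is just the contrapositive of the paper's direct count. You then apply \cref{lem:selection} to the active stages below a bound for $I$, which gives a point of $G_\gamma\cap\Pow(\gamma)\sub X$ containing $I$.

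One sentence is false, however: for finite $I$, the point $I$ need not belong to $X$. Take an active stage $\xi$ (these exist, e.g.\ by the proof of \cref{cl:HL}) and an ordinal $\zeta\ge\xi$. Any $\beta$ with $\set{\zeta}\sub\beta$ satisfies $\xi<\beta$, hence $G_\beta\sub O_\xi\sub[b,F_\xi]$ for $b=m$. So every point of $G_\beta$ contains a member of $F_\xi\sub[\xi]^m$, and $\set{\zeta}$ contains none. By \cref{cl:enumeration}, $\set{\zeta}\notin X$, although $\set{x_\zeta}$ is closed discrete.

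The fix is to delete that special case. Your general argument already covers finite $I$, since a finite set meets only finitely many members of a pairwise disjoint family.

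Likewise, when there are no active stages below $\gamma$, \cref{lem:selection} has no families to apply to, so your remark about enumerations with repetitions does not cover that case. There you should take $x:=I$, which lies in $G_\gamma\cap\Pow(\gamma)=\Pow(\gamma)$, exactly as the paper does.
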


\begin{proof}
We verify the hypothesis of \cref{lem:nonD}.
Let $I\sub\omegaone$ be countable, and suppose that the points $x_\alpha$,
for $\alpha\in I$, are distinct and form a closed discrete set $D\sub X$.
Fix an active stage $\beta$.
At that stage there are a number $0<m<\omega$ and distinct sets
$s_0,s_1,\dotsc\in F_\beta$ such that for each $i<m$ the sequence $\langle x_{s_{n,i}} : n<\omega\rangle$
is convergent in $X$.
Since the ordinal $\beta$ is active, the members of the family $F_\beta$ are pairwise disjoint, and hence so
are the sets $s_0,s_1,\dotsc$.

Let $i<m$.
Suppose that the set $M:=\sset{n<\omega}{s_{n,i}\in I}$ is infinite.
Since the points $x_\alpha$, for $\alpha\in I$, are distinct,
the subsequence $\langle x_{s_{n,i}} : n\in M\rangle$
is a convergent sequence of distinct elements of $D$, a contradiction.
Thus, for all but finitely many $n$ we have $s_{n,i}\notin I$.
It follows that all but finitely many sets $s_n$ are disjoint from the index set $I$.
In particular, infinitely many members of the family $F_\beta$ are disjoint from $I$.

Choose an ordinal $\alpha<\omegaone$ with $I\sub\alpha$.
By \cref{lem:selection}, applied to the families $F_\beta$ at active stages $\beta<\alpha$,
there is a point $x\in G_\alpha\cap \Pow(\alpha)$ containing $I$.
If there are no active stages below $\alpha$, take $x:=I$.
In either case, we have $x\in G_\alpha\cap \Pow(\alpha)\sub X$.
By \cref{lem:nonD}, the space $X$ is not a D-space.
\end{proof}

Since every countable space is a D-space, the space $X$ is uncountable,
and thus $\card{X}=\aleph_1$.
This completes the proof of \cref{thm:main}.
\qed

\section{Results to be added in future revisions}

\makeatletter
\@namedef{r@cor:nonCH-hL-power}{{4.10}{}{}{}{}}
\@namedef{r@cor:nonCH-hL-power@cref}{{[corollary][10][4]4.10}{}{}{}{}}
\@namedef{r@sec:open-problem-consequences}{{7}{}{}{}{}}
\@namedef{r@sec:open-problem-consequences@cref}{{[section][7][]7}{}{}{}{}}
\@namedef{r@cor:covering-questions}{{7.1}{}{}{}{}}
\@namedef{r@cor:covering-questions@cref}{{[corollary][1][7]7.1}{}{}{}{}}
\@namedef{r@thm:diamond-HFCw}{{6.1}{}{}{}{}}
\@namedef{r@thm:diamond-HFCw@cref}{{[theorem][1][6]6.1}{}{}{}{}}
\@namedef{r@cor:further-structure}{{7.3}{}{}{}{}}
\@namedef{r@cor:further-structure@cref}{{[corollary][3][7]7.3}{}{}{}{}}
\let\RA@cref@hyperlink\cref@hyperlink
\def\cref@hyperlink#1#2#3\@nil{%
  \begingroup
  \edef\RA@target{#2}%
  \ifx\RA@target\@empty
    #3%
  \else
    \RA@cref@hyperlink{#1}{#2}#3\@nil
  \fi
  \endgroup}
\makeatother

\begin{theorem}[CH]\label{thm:CH-hL-power}
There is a zero-dimensional non-D space $X$ of cardinality $\aleph_1$ whose
countable power $X^\omega$ is hereditarily Lindel\"of.
\end{theorem}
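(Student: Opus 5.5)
We modify the construction of \Cref{sec:proof-main} so that the killing sets $O_\alpha$ handle finite-dimensional products, and we then argue that $X^\omega$ is hereditarily Lindel\"of. Since a countable power of a space is hereditarily Lindel\"of exactly when every finite power is (a basic open set of $X^\omega$ depends on finitely many coordinates, and an increasing union of countably many hereditarily Lindel\"of subspaces obtained by pulling back finite powers suffices), it is enough to make $X^k$ hereditarily Lindel\"of for each $k<\omega$.

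\textbf{Step 1: a product version of \Cref{lem:HL}.}
We identify $X^k$ with a subspace of $\Pow(\omegaone\times k)\cong 2^{\omegaone}$ via $(y_0,\dots,y_{k-1})\mapsto\bigcup_{j<k} y_j\times\{j\}$. Juh\'asz's criterion then reads as follows. Take pairwise disjoint families $F$ of $m$-element subsets of $\omegaone\times k$, which we may thin to pairwise disjoint projections onto $\omegaone$. For each such $F$, $X^k\cap[b,F]$ must be covered by countably many $[b,s]$.

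\textbf{Step 2: modified construction.}
The enumeration $F_\alpha$ now ranges over countable families of finite subsets of $\alpha\times k$, for all $k<\omega$. A stage is \emph{active} when $F_\alpha$ consists of pairwise disjoint sets whose projections are disjoint, and when there are distinct $s_0,s_1,\dots\in F_\alpha$ such that, for every coordinate $(\xi,j)$ of $s_n$ in position $i$, the points $x_{\xi}$ converge to points $p_i$ lying in $\bigcup_{\beta\le\alpha}G_\beta\cap\Pow(\beta)$. Here $s_{n,i}=(\xi,j)$ refers to $x_\xi$, and the factor index $j$ only records which product coordinate carries the pattern. At an active stage we set
\[
O_\alpha:=\bigcap_{b\sub m}[b,\pi F_\alpha],
\]
where $\pi$ denotes projection to $\omegaone$. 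This is a condition on single points, so $G_\alpha$ remains a $G_\delta$-type intersection of open dense sets in $\Pow(\omegaone)$.

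This is the one point needing care. Membership of a $k$-tuple in $[b,s]$ for $s\sub\omegaone\times k$ depends on the different factors $y_j$ separately. We therefore intersect over all $b\sub m$ and all assignments of the $m$ positions to the $k$ factors. Concretely, we require each point $x\in O_\alpha$ to realize every pattern $b$ on infinitely many $\pi s$. A tuple of points all lying in $O_\alpha$ need not realize a prescribed joint pattern, so we strengthen $O_\alpha$ to a genuinely product-dense condition. The first choice to try is to make $O_\alpha$ demand, for each $b$, infinitely many $s$ with $x\cap\pi s=\pi s[b]$. A tuple $(y_j)$ with each $y_j\notin \Pow(\alpha)$ could still avoid joint patterns. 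To handle this, we run the construction directly on $k$-tuples: $O_\alpha\sub\Pow(\omegaone)^k$ is open dense, and $G_\alpha\sub\Pow(\omegaone)^\omega$ is the intersection of the pullbacks. Points are added when all their finite tuples (taken with earlier points) lie in the appropriate $G_\beta$.

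\textbf{Step 3: verifications.}
\Cref{cl:enumeration} and \Cref{cl:truncations} go through verbatim, coordinatewise. Hereditary Lindel\"ofness of $X^k$ follows as in \Cref{cl:HL}, using \Cref{prp:approximation} applied to the $x$-indices of all coordinates of the sets in $F$. This yields $\delta$ and a convergent sequence $B$. At a stage $\alpha\ge\delta$ with $F_\alpha=B$, every tuple of $X^k$ outside $O_\alpha$ lies in $\Pow(\alpha)^k$, which is second countable. Non-D follows from \Cref{cl:nonD} unchanged, via \Cref{lem:nonD}. Lindel\"ofness of $X$ itself is the case $k=1$.

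The remaining point is that \Cref{lem:selection} must produce a point $x\supseteq I$ that meets the strengthened product conditions. We need $x$ such that all tuples formed from $x$ and earlier points stay in $G_\alpha$. We will handle this by a diagonal choice of infinitely many disjoint witnesses $s$ for each required pattern with fixed other coordinates. Only countably many requirements arise below $\alpha$, so the same enumeration argument works.

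\textbf{Main obstacle.}
The main obstacle is exactly this joint-pattern density. It requires defining $O_\alpha$ so that it is both product-dense, which is needed for hereditary Lindel\"ofness of $X^k$, and satisfiable by a point containing a prescribed closed discrete index set $I$. We address it with the tuple-wise recursion and the extended selection lemma described above.
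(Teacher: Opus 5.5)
There is a genuine gap. Step~2 never commits to a construction, and what Step~3 says goes through ``verbatim'' is incompatible with the conclusion.

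First, a necessary condition. Suppose $X^2$ is hereditarily Lindel\"of. The set $\sset{(y_0,y_1)\in X^2}{y_0\nsubseteq y_1}$ is the union of the clopen sets $\sset{(y_0,y_1)}{\xi\in y_0\setminus y_1}$, so countably many of them cover it, say those with $\xi\in C$. Hence $y\mapsto y\cap C$ is injective on $X$ for some countable $C\sub\omegaone$. This fails for every non-D space $X\sub\Pow(\omegaone)$ satisfying \cref{cl:truncations}. Indeed, $\Un X$ is uncountable, since otherwise $X\sub\Pow(\gamma)$ would be metrizable and hence D. So if $C\sub\gamma$, some $x\in X$ has $x\ne x\cap\gamma\in X$, and these two points have the same trace on $C$.

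In particular, the space of \cref{sec:proof-main} has a square that is not hereditarily Lindel\"of. Truncation closure, and the single-point description $X=\Un_\alpha G_\alpha\cap\Pow(\alpha)$, must therefore be abandoned. Both of the paper's arguments rest on them:
\begin{enumerate}
\item \cref{cl:HL} needs the limits $x_{t_i}\cap\delta$ to be admissible, so that the stage with $F_\alpha=B$ is active.
\item \cref{cl:nonD} needs the limits at active stages to lie in $X$. Otherwise a closed discrete $I$ may meet every member of $F_\beta$, and then no point containing $I$ lies in $O_\beta\sub[\emptyset,F_\beta]$. It also needs the point from \cref{lem:selection} to belong to $X$ automatically.
\end{enumerate}
In a point-by-point recursion, whether a candidate is admissible depends on points added later, and whether $I$ is closed discrete is decided only at the end. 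You need a new mechanism for both, and none is given.

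Second, the step ``every tuple of $X^k$ outside $O_\alpha$ lies in $\Pow(\alpha)^k$'' is false. Suppose $F_\alpha$ consists of sets $\set{(\xi_n,0),(\xi_n,1)}$. Then no pair $(y,y')$ with $y\cap\alpha=y'\cap\alpha$ realizes the pattern ``first in, second out'', whatever $y\notin\Pow(\alpha)$ is; diagonal pairs are a special case. Likewise, a tuple containing a fixed earlier point cannot in general be pushed into a joint-pattern set, since that point may contain every relevant coordinate. So requiring all tuples with earlier points to lie in $G_\beta$ is unsatisfiable as stated.

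A workable scheme must at least do three things:
\begin{enumerate}
\item restrict the joint conditions to tuples of large-index points that are distinct on the relevant coordinates;
\item demand infinitely many witnesses per pattern, so that later points can be fitted in;
\item treat the remaining tuples by induction on $k$. Diagonal pieces are copies of $X^{k-1}$. Tuples with a coordinate in $X\cap\Pow(\alpha)$ lie in $(X\cap\Pow(\alpha))\x X^{k-1}$, which is hereditarily Lindel\"of once $X^{k-1}$ is.
\end{enumerate}
Your ``extended selection lemma'' is exactly where this would have to be carried out, and it is only named, not proved.
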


\begin{theorem}\label{thm:d}
The following assertions are equivalent:
\begin{enumerate}
\item $\fd=\aleph_1$.
\item There is a regular Lindel\"of space of cardinality $\aleph_1$ that is not a D-space.
\item There is a regular hereditarily Lindel\"of space of cardinality $\aleph_1$ that is not a D-space.
\end{enumerate}
\end{theorem}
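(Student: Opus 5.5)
The implications (3)$\Rightarrow$(2) and (2)$\Rightarrow$(1) are the easy direction. (3)$\Rightarrow$(2) is immediate, since hereditarily Lindel\"of implies Lindel\"of. For (2)$\Rightarrow$(1) we argue by contraposition. If $\aleph_1<\fd$, then every Lindel\"of space of cardinality $\aleph_1$ has cardinality smaller than $\fd$, and is therefore Menger by Hurewicz's theorem. By Aurichi's theorem every Menger space is a D-space, so no space as in (2) exists.

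The main work is (1)$\Rightarrow$(3). My plan is to rerun the construction of \Cref{sec:proof-main} with CH replaced by $\fd=\aleph_1$, keeping the space inside a subspace of $\Pow(\omegaone)$ of size $\aleph_1$. CH was used only to enumerate all of $[\omegaone]^{\le\omega}$ and all countable families of finite sets in length $\omegaone$. Under $\fd=\aleph_1$ we cannot do this, so I would restrict the ``universe'' of candidate points.

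First, I would note that the families $F_\alpha$ that actually matter in \Cref{cl:HL} are the sequences $B=\sset{s_n}{n<\omega}$ supplied by \Cref{prp:approximation}. These are chosen from the countable families $F_n$, which are built from finitely many representatives. Second, I would build an $\omegaone$-sequence of countable elementary submodels $M_\alpha$ (of some $H(\theta)$) and choose the points $x_\alpha$ at stage $\alpha$ only among points of $M_{\alpha+1}\cap \Pow(\alpha)$. Likewise, the $F_\alpha$ would range over countable families in $M_{\alpha+1}$, bookkept so that every countable family in $\bigcup_\alpha M_\alpha$ appears cofinally often. The space $X$ is then $\bigcup_\alpha G_\alpha\cap\Pow(\alpha)\cap M$ with $M=\bigcup_\alpha M_\alpha$. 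Claims~\ref{cl:enumeration} and \ref{cl:truncations} go through verbatim, provided $M$ is closed under truncations $x\mapsto x\cap\gamma$, which elementarity gives. \Cref{cl:HL} also survives, because the sets $s_n$ produced by \Cref{prp:approximation} come from $F\cap[\delta]^m$ with indices below $\delta$. Hence the countable family $B$ is hereditarily determined by countably many ordinals, and I must arrange that it lies in $M$.

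The main obstacle is the non-D part, \Cref{cl:nonD}. The point $x\supseteq I$ produced by \Cref{lem:selection} must lie in $M$, and a countable set $I$ of ordinals need not be in $M$ when CH fails. This is exactly where $\fd=\aleph_1$ enters. The plan is to replace ``contains $I$'' in \Cref{lem:nonD} by a domination requirement. Let $\sset{f_\xi}{\xi<\omegaone}$ be a dominating family placed into the models. I would use \Cref{lem:selection} with a set $x$ that contains, for each $n$, an entire final segment of a fixed countable block structure coded by some $f_\xi$. This should make $x$ meet every $N(x_\alpha)$-complement for all but finitely many $\alpha\in I$. The finitely many remaining indices are then handled by adding finitely many points of $D$. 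Concretely, I would refine the neighborhoods $N(x_\alpha)$ to be determined by finitely many coordinates chosen via a scale, so that covering $D$ reduces to an eventual-domination statement. This step, making \Cref{lem:nonD} work with only $\aleph_1$ many candidate witnesses, is where I expect the real difficulty.
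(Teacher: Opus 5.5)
Your implications (3)$\Rightarrow$(2) and (2)$\Rightarrow$(1) are correct. (2)$\Rightarrow$(1) is exactly the paper's remark in the introduction (Hurewicz plus Aurichi). The paper announces (1)$\Rightarrow$(3) but does not prove it in this version, so your plan has to stand on its own. It does not, and the gap is not where you place it.

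Your assertion that \cref{cl:HL} survives is unsupported. That proof needs, for an \emph{arbitrary} uncountable pairwise disjoint $F\sub[\omegaone]^m$, a stage $\alpha\ge\delta$ with $F_\alpha=B$, where $B$ is an infinite subfamily of $F\cap[\delta]^m$. As the argument is written, $B$ must be a subfamily of $F$, so that $[b,B]\sub[b,F']$ for a countable $F'\sub F$. In your version the $F_\alpha$ range only over the $\aleph_1$ countable families in $M$, while $F$ is arbitrary. That $B$ is determined by countably many ordinals is true of every countable family, and it does not put $B$ into $M$. In fact, suppose $\fc>\aleph_1$, which is the only case not covered by \cref{thm:main}. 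Take a countably infinite pairwise disjoint family $C\in M$ of $m$-element subsets of $\delta$, and an almost disjoint family of $\fc$ infinite subfamilies of $C$. An infinite set is contained in at most one member of an almost disjoint family, and $\card{M}=\aleph_1$. So some member $S$ has no infinite subfamily in $M$. If $F\cap[\delta]^m=S$, no stage of your construction can play the role of $B$ at this level, and nothing in your plan produces a better level. Handling arbitrary $F$ with only $\aleph_1$ requirements needs a genuinely new idea, and your proposal has none.

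By contrast, the non-D part, which you call the real difficulty and leave as a sketch, can be done with the paper's unchanged assignment $N(x_\alpha)=\sset{x\in X}{\alpha\notin x}$. This needs two conditions: $M\cap\ww$ is dominating (possible by $\fd=\aleph_1$), and initial segments of the construction, together with their witnesses, lie in $M$. The argument runs as follows.
Given $I\sub\alpha$ indexing a closed discrete set, let $\beta_0,\beta_1,\dotsc$ enumerate the active stages below $\alpha$, with witnessing sequences $\langle s^j_k:k<\omega\rangle$.
The proof of \cref{cl:nonD} shows that $f(j):=\min\sset{n}{s^j_k\cap I=\emptyset\text{ for all }k\ge n}$ is defined.
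Choose $g\in M$ with $g\ge f$ everywhere, a finite modification of a dominating function in $M$.
Then $I':=\alpha\setminus\Un_j\Un_{k\ge g(j)}s^j_k$ lies in $M$, contains $I$, and is disjoint from infinitely many members of each $F_{\beta_j}$.
So \cref{lem:selection}, applied inside $M$, yields $x\in G_\alpha\cap\Pow(\alpha)\cap M\sub X$ with $I\sub x$.

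Thus the whole weight of (1)$\Rightarrow$(3) rests on (hereditary) Lindel\"ofness, which \cref{lem:nonD} also presupposes. That is precisely what your proposal does not provide.
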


\begin{theorem}[$\fd=\aleph_1$]\label{thm:d-power}
There is a zero-dimensional, hereditarily Lindel\"of non-D space $X$ of
cardinality $\aleph_1$ such that $X^\omega\x M$ is Lindel\"of for every
second countable space $M$.
In particular, $X^\omega$ and all finite powers of $X$ are Lindel\"of.
\end{theorem}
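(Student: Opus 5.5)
The plan is to rerun the recursive construction of \Cref{sec:proof-main}, replacing the use of CH by a dominating family $\sset{f_\alpha}{\alpha<\omegaone}\sub\ww$ witnessing $\fd=\aleph_1$. The candidate points are then no longer all countable sets, since there may be $\fc>\aleph_1$ of them. Instead, at each stage $\alpha$ we will enumerate only $\aleph_1$ many \emph{relevant} objects. These are: the countable families $F\sub[\alpha]^{<\omega}$ that arise from the countably many points already built, and, for each countable open cover $\mathcal U$ of $\Pow(\omegaone)^\omega\x M$ coded at stage $\alpha$, the function $f_\alpha$, which will govern how fast the new points are required to enter the members of $\mathcal U$.

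First we reduce the product statement. Every second countable $M$ embeds in the Hilbert cube $Q$. Lindel\"ofness of $X^\omega\x M$ therefore follows if every cover of $X^\omega\x M$ by basic open sets $U\x V$ ($U$ basic in $\Pow(\omegaone)^\omega$, $V$ basic in $Q$) has a countable subcover, and each such basic set is determined by countably many coordinates. We will aim for the following form of the conclusion: $X$ is the union of an increasing $\omegaone$-sequence $X_\alpha:=X\cap\Pow(\alpha)$, and for every open $W\sub\Pow(\omegaone)^\omega\x Q$ containing $X_\alpha^{\,\omega}\x M$ for a suitable countable $\alpha$, the set $X^\omega\x M\setminus W$ is contained in a $\sigma$-compact subset of $\Pow(\beta)^\omega\x Q$ for some countable $\beta$. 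Since $\Pow(\beta)^\omega\x Q$ is second countable, the remainder is then Lindel\"of, and the proof of \Cref{cl:HL} goes through with $X^\omega\x M$ in place of $X$. The role of $\fd=\aleph_1$ is to make this $\sigma$-compact absorption possible with only $\aleph_1$ many constraints: a $\sigma$-compact set in a countable cube is captured by a bound $g\in\ww$ on how far in each coordinate one must look, and every such $g$ is dominated by some $f_\alpha$.

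The proof therefore proceeds in three steps.
\begin{enumerate}
\item Construct the points $x_\alpha\sub\alpha$ and open dense sets $O_\alpha$ exactly as in \Cref{sec:proof-main}, with the additional requirement $x_\alpha\in G_\alpha$. Here $G_\alpha$ now also intersects $f_\alpha$-controlled open sets obtained from the covers coded so far. Each such set is open dense on every $\Pow(\gamma)$ with $\gamma$ large, so \Cref{lem:selection} still supplies the points required for non-D.
\item Check hereditary Lindel\"ofness via \Cref{lem:HL} and \Cref{prp:approximation}, verbatim as in \Cref{cl:HL}, and non-D via \Cref{lem:nonD}, verbatim as in \Cref{cl:nonD}. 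The analogues of \Cref{cl:enumeration} and \Cref{cl:truncations} must be re-verified for the modified $G_\alpha$. This works because truncation $x\mapsto x\cap\gamma$ preserves membership in sets that depend only on coordinates below $\gamma$.
\item Prove the product property by the absorption argument above, using that $X^\omega\setminus\bigcap_{\beta<\alpha}O^\omega_\beta$ lies in a countable union of sets of the form $\Pow(\beta)^\omega$.
\end{enumerate}

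The main obstacle is the replacement of the CH enumeration $\sset{q_\alpha}{\alpha<\omegaone}$ in \Cref{cl:enumeration}. Without CH we cannot let $X$ equal the whole set $\Un_\alpha G_\alpha\cap\Pow(\alpha)$, which may have size $\fc$. The fix I would try first is to define $X$ as the closure of the constructed points under the operations actually used in the proof: the truncations $x\cap\gamma$ from \Cref{cl:truncations}, and the $\aleph_1$ many points produced by \Cref{lem:selection} at stages $\alpha$, one for each countable $I$ that is encoded by the dominating family, meaning $I$ is read off from some $f_\alpha$ together with a countable set of earlier indices. This keeps $\card{X}=\aleph_1$. The delicate point is to show that every countable closed discrete $I$ is still covered, as \Cref{lem:nonD} requires. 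For this I would argue that $I$ sits inside a set of indices whose enumeration is dominated by some $f_\alpha$, and that the point supplied by \Cref{lem:selection} for that larger set still contains $I$.
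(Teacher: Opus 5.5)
Your proposal has genuine gaps exactly where CH was used, and the product part is not yet an argument. The paper only states this theorem among results to be added later, so I judge your sketch on its own.

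\textbf{Hereditary Lindel\"ofness.} This cannot be checked ``verbatim as in \Cref{cl:HL}''. That proof takes an arbitrary uncountable disjoint $F$ and extracts, via \Cref{prp:approximation}, a countable family $B=\sset{s_n}{n<\omega}\sub F\cap[\delta]^m$. It then needs an active stage $\alpha\ge\delta$ with $F_\alpha=B$. This is precisely where CH enters, because all of $[[\omegaone]^{<\omega}]^{\le\omega}$, a set of size $\fc$, is enumerated. Your $\aleph_1$ ``relevant'' families arising from the points already built do not replace this. Every infinite subfamily of a convergent family is again convergent, so there are still $\fc$ candidates, and an enumeration of length $\omegaone$ would have to contain a subfamily of every uncountable disjoint $F$. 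That fails in general. After adding $\aleph_2$ random reals to a model of CH (so $\fd=\aleph_1<\fc$), take any $\aleph_1$ countably infinite sets $C_\alpha\sub\omegaone$. The set $A\sub\omegaone$ coded by an element of $2^{\omegaone}$ random over an intermediate model containing all $C_\alpha$ is uncountable and contains no $C_\alpha$. So for $m=1$ and $F=\sset{\set{\xi}}{\xi\in A}$ no stage is available. Hereditary Lindel\"ofness therefore needs a genuinely new mechanism exploiting $\fd=\aleph_1$, and you supply none.

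\textbf{The product and non-D parts.} The Lindel\"ofness of $X^\omega\x M$ is the real content of the statement, since hereditary Lindel\"ofness of $X$ alone already makes $X\x M$ hereditarily Lindel\"of. It is only described, and the concrete fact you intend to use in step~3 is false. A sequence lies outside $\bigcap_{\beta<\alpha}O_\beta^\omega$ as soon as \emph{one} coordinate lies outside some $O_\beta$, while its other coordinates range over all of $X$. So this set consists of sequences having some coordinate in some $\Pow(\beta)$, not of sequences in a countable union of sets $\Pow(\beta)^\omega$. Your target statement fails for the same reason. Take $W=\sset{(\bar x,q)}{\gamma\notin x_0}$ with $\gamma\ge\alpha$ belonging to some point of $X$. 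Then the remainder $X^\omega\x M\setminus W$ contains sequences with arbitrary tails in $X$, and $X$ lies in no $\Pow(\beta)$ (else it would be metrizable, hence D). Controlling sequences that mix small and large coordinates is the whole difficulty. Nothing explains how $\aleph_1$ coded covers and functions $f_\alpha$ handle an arbitrary open cover. Moreover, the ``$f_\alpha$-controlled'' sets are never specified and are not of the form $\bigcap_{b\sub m}[b,F]$, so \Cref{lem:selection} and \Cref{cl:truncations} do not apply to them as claimed. Your reduction to $M\sub Q$ also assumes $M$ is metrizable; for Hausdorff second countable $M$, use that $M$ is a continuous image of a separable metrizable space.

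Your non-D repair is the most salvageable part. An arbitrary dominated superset of $I$ need not satisfy the hypothesis of \Cref{lem:selection}. What works is the ideal of sets $J\sub\alpha$ meeting only finitely many terms $s^\beta_n$ of the fixed convergent family at each active $\beta<\alpha$. This ideal contains every closed discrete $I\sub\alpha$. Every member lies in one of the sets $\bigcap_\beta\big((\alpha\setminus\Un_n s^\beta_n)\cup\Un_{n<h(\beta)}s^\beta_n\big)$, with $h$ ranging over an everywhere-dominating family of size $\fd$, and each of these sets misses infinitely many members of every active $F_\beta$. The resulting witnesses lie in $G_\alpha\cap\Pow(\alpha)$. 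They must be admitted at later indices by the rule of \Cref{sec:proof-main}, not by your stronger requirement $x_\gamma\in G_\gamma$.
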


\begin{theorem}[$\diamondsuit$]\label{thm:strong-HFCw}
There is a zero-dimensional, strongly $\HFCw$ space $X$ of cardinality $\aleph_1$ that is not D.
Its countable power $X^\omega$ is hereditarily Lindel\"of.
\end{theorem}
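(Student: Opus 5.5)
The plan is to rerun the recursion of \Cref{sec:proof-main}, with two changes. First, Jensen's $\diamondsuit$ replaces the CH bookkeeping, so that every relevant uncountable family is handled on a stationary set of stages. Second, the space is \emph{thinned}: it will contain only countably many points below each stage. This thinning is what should upgrade hereditary Lindel\"ofness to a strong $\HFCw$-type property, namely that for an uncountable pairwise disjoint $F\sub[\omegaone]^m$ and $b\sub m$ only countably many points of $X$ lie outside $[b,F]$, in every finite power and for families of finite partial functions in several coordinates.

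\textbf{Construction.} Fix a $\diamondsuit$-sequence $\langle A_\alpha:\alpha<\omegaone\rangle$. At stage $\alpha$ it is decoded as a guess for three objects: a trace $F\cap[\alpha]^{<\omega}$ of a pairwise disjoint family of finite sets (or of finite partial functions of several coordinates, for powers), a countable set $I\sub\alpha$, and a bookkeeping element. At stage $\alpha$ we add only countably many points $x^\alpha_k\sub\alpha$, and each of them must lie in $G_\alpha=\bigcap_{\beta<\alpha}O_\beta$. These points are of three kinds:
\begin{enumerate}
\item the truncations $x\cap\gamma$ for $\gamma\le\alpha$ of all earlier points, which keeps \Cref{cl:truncations} in force;
\item limits $p_i$ needed to make approximating sequences converge inside $X$;
\item when $A_\alpha$ codes a countable $I$ whose points form a closed discrete set, a point containing $I$.
\end{enumerate}
Points of the third kind are obtained by a diagonal version of \Cref{lem:selection} that meets the countably many dense sets $O_\beta$, $\beta<\alpha$. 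A stage is active exactly as before, with $O_\alpha:=\bigcap_{b\sub m}[b,F_\alpha]$ for the guessed $F_\alpha$. The key invariant is that $X\cap\Pow(\alpha)$ is countable, since every point of $X$ is added at some stage $\beta$ and is contained in $\beta$. Consequently $X\setminus O_\alpha$ is countable, because points added after stage $\alpha$ lie in $O_\alpha$.

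\textbf{Verification.} The non-D part follows \Cref{cl:nonD} and \Cref{lem:nonD}. Given a countable closed discrete $I$, the proof of \Cref{cl:nonD} shows that every active $F_\beta$ has infinitely many members disjoint from $I$. A stage $\alpha$ where $\diamondsuit$ guesses $I$ then supplies a point containing $I$.

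For the strong $\HFCw$ property, fix an uncountable pairwise disjoint $F$ and $b\sub m$. By \Cref{prp:approximation-club}, the set $D$ of ordinals $\delta$ satisfying the approximation conclusion is a club. $\diamondsuit$ gives a stationary set of $\alpha\in D$ with $A_\alpha$ correctly guessing $F\cap[\alpha]^m$. The approximating sequence at such an $\alpha$ converges to truncations $x_{t_i}\cap\alpha$, which lie in $X$ by the first kind of points, so stage $\alpha$ is active. Then $O_\alpha\sub[b,F\cap[\alpha]^m]\sub[b,F]$, and hence $X\setminus[b,F]\sub X\setminus O_\alpha$ is countable. Applying the same argument to families coded on $\omegaone\times n$ yields the property for $X^n$, and \Cref{lem:HL}, applied coordinatewise, gives that $X^\omega$ is hereditarily Lindel\"of. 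Here one uses that basic open sets of $X^\omega$ depend on finitely many coordinates, so the countable-complement property for all $X^n$ transfers.

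\textbf{Main obstacle.} The hardest step is reconciling the thinning with closure. The truncation points and the convergent limits must all lie in $G_\alpha$. They must also be fixed before we know which later families become active, since otherwise \Cref{cl:truncations} fails for points added later. I would first try the argument of \Cref{cl:truncations} unchanged: since every $s\in F_\beta$ is contained in $\beta$, truncating at $\gamma>\beta$ preserves membership in $O_\beta$, so truncations of points in $G_\alpha$ stay in the appropriate $G_\gamma$. Only countably many truncations arise per stage, which keeps $X\cap\Pow(\alpha)$ countable.
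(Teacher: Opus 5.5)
\textbf{The powers are not handled, and the property you aim for there is false.} Identify $\Pow(\omegaone)^2$ with $\Pow(\omegaone\times 2)$, fix $y\in X$ and $\gamma<\omegaone$ with $y\sub\gamma$, and let $F=\sset{\{(\xi,0)\}}{\gamma\le\xi<\omegaone}$ and $b=\{0\}$. Then $[\{0\},F]=\sset{(u,v)}{u\nsubseteq\gamma}$, so $\{y\}\times X$ misses $[b,F]$. Since $X$ is uncountable, your ``only countably many points of $X^n$ lie outside $[b,F]$'' fails for every uncountable $X$. Whatever strong $\HFCw$ means exactly, it cannot be this statement about all of $X^n$; it has to be phrased for pairwise disjoint families of index tuples, as in \Cref{prp:approximation}.

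More to the point, your recursion only requires each new point to lie in the one-coordinate sets $O_\beta$. No condition involves tuples formed with points already in $X$. For an $n$-tuple of points added at different stages, a single member of an $n$-dimensional family must witness the pattern in all coordinates at once, and independent one-coordinate conditions cannot arrange that. So ``the same argument on $\omegaone\times n$'' has nothing to run on. What is missing is an $n$-dimensional version of the construction:
\begin{enumerate}
\item killing sets in $\Pow(\omegaone)^n$ attached to families of disjoint index tuples;
\item activity defined by convergence of tuples;
\item new points chosen generic over every tuple of old points (thinness is what makes this a countable task per stage);
\item a version of \Cref{lem:selection} guaranteeing that the point containing a closed discrete $I$ also meets these joint conditions.
\end{enumerate}
Without this, neither the ``strongly'' part nor hereditary Lindel\"ofness of the finite powers, hence of $X^\omega$, is proved. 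Your reduction from $X^\omega$ to finite powers is fine.

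\textbf{The thinning bookkeeping for $X$ itself is also broken as written, though repairable.} Countability of $X\cap\Pow(\alpha)$ does not follow from ``each point is added at a stage $\beta$ and is contained in $\beta$''. Your kind (1) adds subsets of $\gamma$ at stages far above $\gamma$. Nor can these truncations be required to lie in $G_\alpha$. If some active $\beta$ with $\gamma\le\beta<\alpha$ has $F_\beta\cap[\gamma]^m=\emptyset$, then no subset of $\gamma$ lies in $[m,F_\beta]$, so no $x\cap\gamma$ lies in $O_\beta$. If such truncations are admitted anyway, ``points added after stage $\alpha$ lie in $O_\alpha$'' fails, and $X\setminus O_\alpha$ can be uncountable.

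Activity is a further problem. You phrase it through truncations of not-yet-added points being in $X$, which is not decided at stage $\alpha$. The paper's definition is decidable only because its $X$ contains all of $\Un_{\beta\le\alpha}G_\beta\cap\Pow(\beta)$, which is exactly what thinning discards.

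The repair is as follows.
\begin{enumerate}
\item Drop kind (1).
\item Call $\alpha$ active if some subsequence of the guessed family has coordinate limits in $G_\alpha\cap\Pow(\alpha)$, and add those limits at stage $\alpha$.
\item A point added at a stage $\zeta\ge\alpha$ lies in $G_\zeta\sub G_\alpha$, so its truncation to $\alpha$ lies in $G_\alpha\cap\Pow(\alpha)$.
\item Hence correct guesses are active at those $\alpha$ in the club of \Cref{prp:approximation-club}, intersected with the clubs where indices below $\alpha$ are exactly the points of earlier stages and where members of $F$ meeting $\alpha$ lie below $\alpha$.
\item Non-D then goes through. Replace ``$I$ is closed discrete'' by the checkable condition that every active $F_\beta$, $\beta<\alpha$, has infinitely many members disjoint from $I$. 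This works because the limits witnessing activity are now in $X$.
\end{enumerate}
This gives hereditary Lindel\"ofness, non-D, and your countable-complement property for $X$ itself. The powers remain the real gap.
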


\section{Conclusion}
\label{sec:conclusion}\label{sec:classified-implications}

The preceding results give consistent negative answers to the following
questions about regular spaces and the D-property.
The hereditary Lindel\"of non-D examples already follow from
$\fd=\aleph_1$.
Hereditary Lindel\"ofness of the countable power is obtained under CH in
\cref{thm:CH-hL-power}. By \cref{cor:nonCH-hL-power}, such examples also exist
in models with $\fd=\aleph_1<\fc$.
The conclusions involving $\HFCw$ or countable closures use $\diamondsuit$.
The structural and game conclusions are proved in
\Cref{sec:open-problem-consequences}.

\Cref{thm:main} shows that regular (hereditarily) Lindel\"of spaces are not provably D
\begin{citelist}
\cite[Question~1]{Eisworth2007};
\cite[Problem~14]{HrusakMoore2007};
\cite[Problem~10]{SoukupSzeptycki2019}
\end{citelist}.
It also gives a zero-dimensional example of the kind sought by Soukup and Szeptycki~\cite[Question~5.5]{SoukupSzeptycki2012}.

By a theorem of Tall~\cite[Theorem~6(a)]{Tall1995}, our hereditarily Lindel\"of example is indestructibly Lindel\"of.
This gives a consistent negative answer to a question of Aurichi and Tall, whether every
regular indestructibly Lindel\"of space is D~\cite[Section~3, item~1]{AurichiTall2012}.

Every open cover of every subspace of our example has a clopen partition refinement.
As proved in \cref{cor:covering-questions}, these partitions give all the
listed covering properties, as well as normality and $aD$.
This settles the questions whether paracompactness,
subparacompactness, metacompactness, or screenability implies D~\cite[Questions~2, 3, and~6]{Eisworth2007},
and the corresponding questions for the additional covering properties listed by Gruenhage~\cite[Section~3, p.~16]{GruenhageSurvey2011}.
In particular, it answers the D-parts of problems of Arhangel'skii concerning countably metacompact weakly $\theta$-refinable spaces and screenable Tychonoff spaces
\begin{citelist}
\cite[Problems~1.18 and~1.22]{Arhangelskii2005};
\cite[Questions~4 and~6]{Eisworth2007}
\end{citelist},
and a weakly submetacompact alternative of Gruenhage~\cite[Question~3.1(1)]{GruenhageSurvey2011}.

Every countable subset of either diamond space in
\cref{thm:diamond-HFCw,thm:strong-HFCw} has countable closure
(\Cref{cor:further-structure}).
Thus every countable subset has Menger, indeed $\sigma$-compact, closure.
Since these spaces are $\HFCw$ and are not D, this answers both parts of a question of Soukup and Szeptycki~\cite[Problem~15]{SoukupSzeptycki2019},
and also  the $\sigma$-compact-closure variant asked immediately after that problem.
The space in \cref{thm:strong-HFCw} is, in addition, strongly $\HFCw$.

For the space $X$ in \cref{thm:CH-hL-power}, the countable power $X^\omega$ is hereditarily Lindel\"of.
Thus CH suffices to answer a question of Soukup and Szeptycki~\cite[Question~5.6]{SoukupSzeptycki2012}:
a regular space can have a hereditarily Lindel\"of countably infinite power without being D.
The fixed neighborhood assignment on our space $X$ gives SET a winning strategy in both games of Gruenhage and Szeptycki,
answering two of their questions~\cite[Questions~3 and~4]{GruenhageSzeptycki2011}.
The first question assumes hereditary Lindel\"ofness of $X^\omega$.
The second assumes that every finite power of every subspace of $X$ is Lindel\"of.
The example in \cref{thm:CH-hL-power} satisfies both hypotheses under CH;
the example in \cref{thm:strong-HFCw} is, in addition, strongly $\HFCw$.

The club principle $\clubsuit$, and even superclub, do not suffice
to obtain the existence assertion of \Cref{thm:main}, since they are consistent with
$\aleph_1<\fd$~\cite[Theorem~3]{GartiShelah2023}.

\end{document}